\documentclass[11pt]{amsart} 

\usepackage{amsmath}
\usepackage{amsthm}
\usepackage{amssymb}
\usepackage{mathrsfs}
\usepackage{mathtools}

\usepackage{xcolor}
\usepackage{framed}
\usepackage{tikz}
\usepackage{float}
\usepackage{hyperref}
\usepackage{url}

\definecolor{shadecolor}{gray}{0.9}

\newtheorem{theorem}{Theorem}[section]
\newtheorem{lemma}[theorem]{Lemma}
\newtheorem{proposition}[theorem]{Proposition}
\newtheorem{corollary}[theorem]{Corollary}

\theoremstyle{definition}
\newtheorem{definition}[theorem]{Definition}
\newtheorem{remark}[theorem]{Remark}
\newtheorem{example}[theorem]{Example}

\numberwithin{equation}{section}

\newcommand{\cQ}{\mathcal{Q}}
\newcommand{\R}{\mathbb{R}}
\newcommand{\N}{\mathbb{N}}
\newcommand{\Z}{\mathbb{Z}}
\newcommand{\Q}{\mathbb{Q}}
\newcommand{\cX}{\mathcal{X}}
\newcommand{\cC}{\mathcal{C}} 
\newcommand{\cPx}{\mathcal{P}} 

\newcommand{\mC}{\Omega} 
\renewcommand{\r}{\mathcal{R}} 
\newcommand{\f}{f} 
\renewcommand{\S}{\mathcal{L}} 
\newcommand{\V}{\mathscr{V}} 
\newcommand{\SE}{S \mkern-2mu E} %
\newcommand{\SO}{S \mkern-2mu O} %
\newcommand{\ucp}{uniform-cap property}

\DeclareMathOperator{\vol}{v}
\DeclareMathOperator{\Stab}{Stab}
\DeclareMathOperator{\Orbit}{Orbit}

\begin{document}

	\title[Hypersphere-Based Restricting Conditions]{Hypersphere-Based Restricting Conditions for Colorings of the Euclidean Space}

	\author{Gabriel Istrate}
	\address{
		Faculty of Mathematics and Computer Science\\ 
		University of Bucharest\\
		Bucharest, Romania}
	\email{gabriel.istrate@unibuc.ro} 
	\author{Catalin Zara}
	\address{Department of Mathematics\\ 
		University of Massachusetts Boston\\
		Boston, MA, United States of America}
	\email{catalin.zara@umb.edu}
	
	\date{}
	
	\subjclass[2020]{Primary 52C10. Secondary 05D10, 54E52.}
	\keywords{geometric Ramsey theory, 
		space colorings, 
		forcing configurations, 
		rigid motions, simplices,
		Baire category}

	\begin{abstract}
		We study colorings of the Euclidean space constrained by \emph{hypersphere forcing conditions}: if an admissible hypersphere, \(S_r(p)\), centered at a point \(p\) and of radius \(r\) contains a monochromatic set of points satisfying a certain property \(\cPx\), then the center of the hypersphere must have that color. These forcing conditions may be restricted in applicability to a specific set of hyperspheres \(S_r(p)\). 
		
		For cardinality-based forcing conditions we prove a general theorem: for countably many colors and any uncountable set of admissible radii \(\r\), such a coloring is locally monochromatic on any admissible center set \(\mC \subseteq \R^n\) (hence constant, for  connected \(\mC\)). 
		
		For rigid geometric properties (simplex shape, edge-length, volume constraints) we show that forcing conditions alone are insufficient without regularity assumptions. Our main result shows that for colorings satisfying a certain Baire regularity condition rigid geometric properties enforce local monochromaticity and, in the presence of a certain \emph{``uniform cap" condition}, global monochromaticity. 
		
		Applications include dichotomies for edge-length and volume constraints in terms of \(\inf(\S)\) and \(\inf(\V)\), and a comeagerness criterion in the ``all edges in \(\S\)'' regime.
	\end{abstract}
	
	\maketitle

	\tableofcontents

	\section{Introduction}
	\label{sec:intro}
	A central theme of \emph{Euclidean Ramsey theory} \cite{Graham} is that simple local restrictions on combinatorial-geometric configurations, most prominently those arising from colorings of \(\R^n\), often force global structure.

	In this paper we deal with one concrete scenario of this type. Specifically,  we investigate colorings of the \(n-\)dimensional Euclidean space \(\R^n\) (\(n\geqslant 2\)) in which the color of a point \(p\) is constrained by monochromatic configurations lying on (hyper)spheres\footnote{For convenience, unless specified otherwise, we will use the term \emph{sphere} to mean \emph{hypersphere}.}  centered at \(p\).
	The concrete manifestation of the ``local-to-global forcing'' phenomenon is the following: if sufficiently many (or sufficiently structured) monochromatic configurations occur on many admissible spheres around \(p\), then the forcing rule propagates color information and can compel the coloring to be locally constant, or even globally monochromatic.

	\subsection{Hypersphere forcing conditions}
	
	A \emph{coloring} of a topological space \(\cX\) is a function \(\f \colon \cX\to \cC\) from \(\cX\) to a nonempty set \(\cC\).
	For each \(c\in \cC\), the set \(\f^{-1}(c)\) is the \emph{color class} of \(c\).
	We say that \(\f\) is \emph{monochromatic} if it is constant on \(\cX\), \emph{locally monochromatic} if every point has an open neighborhood on which \(\f\) is constant, and \emph{somewhere locally monochromatic} if \(\f\) is constant on some nonempty open subset of \(\cX\).
	Throughout, \(\cX\) will be \(\R^n\) with its usual topology, together with induced subspace topologies.
	
	We will work in the following general framework: 
	Fix \(n\geqslant 2\), a set of colors \(\cC\subseteq \N\), a nonempty set \(\mC \subseteq\R^n\) of \emph{admissible centers}, and a nonempty set \(\r \subseteq(0,\infty)\) of \emph{admissible radii}.
	For \(p\in\R^n\) and \(r>0\) let \(S_r(p)\) denote the sphere of radius \(r\) centered at \(p\); we call it \emph{admissible} if \(p\in\mC\) and \(r \in \r\).
	Let \(\cPx\) be a fixed property that subsets of \(\R^n\) may or may not satisfy.
	We study colorings \(\f \colon \R^n \to \cC\) satisfying the condition
	
	\begin{shaded}
		\(\cQ_{\cC,\cPx,\mC,\r}\):
		For any color \(c \in \cC\) and any admissible sphere \(S_r(p)\),
		if the set \(\f^{-1}(c) \cap S_r(p)\) has a subset with property \(\cPx\),
		then \(\f(p) = c\).
	\end{shaded}
	
	Thus, if an admissible sphere contains a monochromatic configuration of type \(\cPx\), then the center is forced to have that monochromatic color.
	The central question of this paper is:
	
	\begin{shaded} 
		\textbf{When does \(\cQ_{\cC,\cPx,\mC,\r}\) force \(\f\) to be locally monochromatic on \(\mC\), or even monochromatic?}
	\end{shaded}
	
	This formulation explicitly decouples the \emph{geometric constraints} (the property \(\cPx\)) from the \emph{largeness/regularity assumptions} (the admissible centers and radii, and later, regularity assumptions on color classes).
	
	\subsection{A compelling, robust example: cardinality forcing}
	
	Our study was inspired by the following problem, slightly paraphrased,
	which appeared as Problem~B1 on the 86th W. L. Putnam Mathematical Competition 
	in December~2025, see \cite{Putnam}.
	
	\smallskip
	
	\emph{Suppose that each point in the plane is colored either \(1\) (red) or \(2\) (green),
		subject to the following condition.
		If a circle contains three points of the same color \(c \in \{1,2\}\),
		then the center of the circle also has color \(c\).
		Show that all points in the plane have the same color.}
	
	\smallskip
	
	In the framework described above, we have \(n=2\) (two-dimensional space), \(\cC = \{1,2\}\) (two colors),
	\(\mC = \R^2\) (all points are admissible centers),
	and \(\r = (0,\infty)\) (all radii are admissible).
	The property \(\cPx\) is that ``\emph{the set has cardinality at least \(3\)}.''
	
	One of the solutions presented in \cite{Putnam} considers a slightly more general
	setting that allows an arbitrary finite number \({X}\) of colors and requires that the
	intersection contain at least a fixed number \({Y}\) of points in order to force
	the conclusion about the color of the center.
	In this case \(\cC = \{1,\ldots,{X}\}\), and the property \(\cPx\) is that ``\emph{the set has
		cardinality at least \({Y}\)}.''
	We denote the corresponding condition \(\cQ_{\cC,\cPx,\mC,\r}\) by
	\(\cQ_{{X},{Y},\mC,\r}\).
	The conclusion remains the same: 
	If all circles are admissible, that is, if \(\mC = \R^2\) and \(\r = (0,\infty)\),
	then any coloring \(\f\) satisfying \(\cQ_{{X},{Y},\mC,\r}\) is monochromatic.

	Section~\ref{sec:cardinality} considers this type of restriction in greater generality.
	We replace the ambient space \(\R^2\) by a general \(\R^n\), \(n \geqslant 2\), allow countably many\footnote{we use the term \emph{countable} to mean the cardinality of a subset, finite or infinite, of \(\N\)} colors and countably many forcing thresholds \({Y}\), allow \(\mC \subseteq\R^n\) to be an arbitrary nonempty admissible-center set, and restrict radii to an arbitrary \emph{uncountable} set \(\r\).
	Theorem~\ref{thm2} shows that in this cardinality regime the forcing principle is already extremely demanding:
	any coloring satisfying \(\cQ_{{X}, {Y}, \mC, \r}\) must be \emph{locally monochromatic on \(\mC\)}, and hence constant on connected \(\mC\).
	
	\subsection{Rigid geometric forcing properties often require regularity}
	
	In the remainder of the paper we turn to geometric properties \(\cPx\) invariant under rigid motions, most notably, properties defined by the vertices of simplices with prescribed shape, edge-length constraints, or volume constraints.
	
	We show that for such properties forcing behaves differently, compared to cardinality constraints: pathological non-constant colorings, vacuously satisfying such forcing conditions can often be constructed, sometimes under assumptions outside Zermelo-Fraenkel theory. For such geometric forcing rules, \emph{additional regularity assumptions on color classes are necessary} if one wants the forcing condition to imply the existence of somewhere locally monochromatic open structure. For such colorings, we  describe an enforcing  mechanism below.  
	
	\subsection{The main mechanism for geometric forcing conditions: Baire-category forcing at small radii}
	
	Beginning in Section~4 we impose a Baire regularity hypothesis by working with \emph{somewhere comeager} colorings \(\f \colon \R^n \to \cC\): these are colorings for which some color class is comeager on a nonempty open set.
	We deal with forcing rules where all points are admissible centers (\(\mC = \R^n\)) and the set \(\r\) of admissible radii is \emph{comeager near \(0\)}. Our main result, 
	Theorem~\ref{thm:general_formQ}, gives sufficient conditions on
	properties \(\cPx\) under which the colorings under consideration necessarily produce a monochromatic open set (hence enforcing somewhere local monochromaticity), and under a stronger ``uniform-cap'' hypothesis force global monochromaticity. In Theorem~\ref{thm:rigid_Q} we apply this general result to the special case of \emph{rigid geometric properties}, using the representations of spheres as homogeneous spaces under the transitive action of the group of rigid motions.
	
	These two results are the conceptual engine behind our applications.
	
	\subsection{Applications: simplex shapes, edge-lengths, and volumes}
	
	Sections~\ref{sec:IT-ET-RT}--~\ref{sec:volumes} apply the general framework developed in Theorems~\ref{thm:general_formQ} and~\ref{thm:rigid_Q} to properties \(\cPx\) defined by classes of simplices. Those conditions are given in terms of 
	shapes (Section~\ref{sec:IT-ET-RT}), edge lengths (Section~\ref{sec:edge_length}), 
	or volumes (Section~\ref{sec:volumes}).
	
	\begin{itemize}
		\item \textbf{Simplex shapes (Section~\ref{sec:IT-ET-RT}
			).}
		For isosceles, regular (equilateral), and right simplices, we prove that the forcing rules under consideration always yield a monochromatic open set (Theorem~\ref{thm:BPdisk}). The obstruction to global monochromaticity (in the examples we study) is the fact that, for those simplices,
		the circumcenter lies in the convex hull of the vertices.
		
		\item \textbf{Edge-length constraints (Section~\ref{sec:edge_length}).}
		Given \(\S \subset(0,\infty)\), we study forcing rules that trigger when a monochromatic simplex has at least \(k\) edge lengths in \(\S\).
		When \(k<N=\binom{n+1}{2}\), Theorem~\ref{thm:infL} gives a sharp dichotomy:
		such forcing yields monochromaticity for all colorings exactly when \(\inf(\S)=0\).
		When \(k=N\) (all edges constrained), the behavior changes:
		\(\inf(L\S)=0\) no longer suffices (Example~\ref{exmp:not_inf}), and a sufficient hypothesis is that \(\S\) is comeager near \(0\) (Theorem~\ref{thm:comeagerL}).
		
		\item \textbf{Volume constraints (Section~\ref{sec:volumes}).}
		Given \(\V \subset(0,\infty)\), forcing based on the existence of a monochromatic simplex with volume in \(\V\) again has a sharp threshold:
		Theorem~\ref{thm:infL_vol} shows that monochromaticity for all such colorings holds exactly when \(\inf(\V)=0\).
	\end{itemize}
	
	\subsection{Organization of the Paper}
	
	Section~\ref{sec:cardinality} proves the cardinality-forcing theorem (Theorem~\ref{thm2}).
	Section~\ref{sec:rigid_properties} records facts about rigid motions and introduces rigid geometric properties defined via simplices.
	Section~4 develops the Baire-category framework for somewhere comeager colorings and proves the main mechanism (Theorems~\ref{thm:general_formQ} and~\ref{thm:rigid_Q}).
	Sections~\ref{sec:IT-ET-RT}--~\ref{sec:volumes} apply this mechanism to forcing rules defined by simplex shapes, edge lengths, and volumes.

	\section{Cardinality Conditions}\label{sec:cardinality}
	
	In this section we show that the conclusion of the Putnam competition problem
	remains valid in more general settings.
	These include variations in the finite dimension of the space, the cardinality of the set of colors,
	the minimum number of same-color points on a circle required to force the color
	of the center,
	the cardinality of the set of admissible radii,
	and the choice of admissible centers.
	
	In the general framework, let \({Y} \leqslant \aleph_0\) be a fixed cardinality.
	We consider the following property.
	
	\begin{shaded}
		\(\cPx_{({Y})}\):
		A set has this property if its cardinality is at least \({Y}\).
	\end{shaded}
	
	We refer to this property simply as \(({Y})\).
	
	\begin{theorem}\label{thm2}
		Let \(\r \subset (0,\infty)\) be an uncountable set of admissible radii,
		let \(\cC\) be a set of colors of cardinality \({X} \leqslant \aleph_0\),
		let \({Y} \leqslant \aleph_0\) be a fixed cardinality,
		and let \(\mC \subset \R^n\) be a nonempty set of admissible centers.
		Let \(\f \colon \R^n \to \cC\) be a coloring of the space \(\R^n\) satisfying the following
		condition:
		\begin{shaded}
			\(\cQ_{{X},{Y},\mC,\r}\):
			For every color \(c \in \cC\) and every admissible sphere \(S_r(p)\),
			if the set \(\f^{-1}(c) \cap S_r(p)\) has cardinality at least \({Y}\),
			then \(\f(p) = c\).
		\end{shaded}
		Let \(\f|_{\mC} \colon \mC \to \cC\) denote the restriction of \(\f\) to the induced
		topological space \(\mC\).
		Then \(\f|_{\mC}\) is locally monochromatic.
		If \(\mC\) is connected, then \(\f|_{\mC}\) is monochromatic.
	\end{theorem}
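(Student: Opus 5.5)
The plan is to exploit one basic feature of condition \(\cQ_{{X},{Y},\mC,\r}\): on every admissible sphere, \emph{at most one} color can be ``large'', and that color must be the color of the center. First I dispose of the degenerate thresholds. If \({Y}\leqslant 1\), every admissible sphere is nonempty, so some color has at least \({Y}\) points on it. When \({Y}=0\) every color forces the center; when \({Y}=1\) every color occurring on the sphere forces it. In either case this pins \(\f\) down very rigidly and the conclusion follows directly. So assume \(2\leqslant {Y}\leqslant\aleph_0\). For \(p\in\mC\) and \(r\in\r\), the sphere \(S_r(p)\) is uncountable (\(n\geqslant 2\)) and is covered by countably many color classes. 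Hence some color class meets it in uncountably many points, in particular in at least \({Y}\) points, so that color is \(\f(p)\). Every other color \(c'\neq \f(p)\) meets \(S_r(p)\) in fewer than \({Y}\) points, and therefore in finitely many points. Consequently the set \(E_r(p)\) of points of \(S_r(p)\) not colored \(\f(p)\) is countable. This ``all but countably many points of every admissible sphere carry the center's color'' statement is the structural input for everything that follows.

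Next I localize. Since \(\r\) is uncountable, it has a two-sided condensation point or at least a condensation point \(\rho>0\): every neighborhood of \(\rho\) contains uncountably many elements of \(\r\). Fix \(p\in\mC\) and take the neighborhood \(U=\{q\in\mC : |q-p|<\rho/2\}\). Suppose \(q\in U\) with \(c'=\f(q)\neq c=\f(p)\), and let \(d=|p-q|\); the case \(d=0\) is trivial. Let \(T=\r\cap(\rho-d/2,\rho+d/2)\), which is uncountable. For \(r,s\in T\) we have \(|r-s|<d<r+s\), so \(S_r(p)\cap S_s(q)\) is a nonempty \((n-2)\)-sphere, which is uncountable when \(n\geqslant 3\). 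A point of this intersection colored \(c\) lies in \(E_s(q)\), while a point not colored \(c\) lies in \(E_r(p)\).

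For \(n\geqslant 3\) this already gives a contradiction. Fix \(r\in T\). The uncountably many pairwise disjoint sets \(S_r(p)\cap S_s(q)\), for \(s\in T\), cannot all meet the countable set \(E_r(p)\). Hence some whole intersection, which is uncountable, is colored \(c\) and lies in the countable set \(E_s(q)\), which is absurd. Therefore \(\f\) is constant on \(U\), so \(\f|_{\mC}\) is locally monochromatic. If \(\mC\) is connected, each color class of \(\f|_{\mC}\) is open, so the classes form a partition of \(\mC\) into open sets, and connectedness forces \(\f|_{\mC}\) to be monochromatic.

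The main obstacle is \(n=2\), where \(S_r(p)\cap S_s(q)\) consists of only two points. The naive bookkeeping then produces a relation on \(T\times T\) with co-countable rows and countable columns, and such relations can exist (Sierpi\'nski-type). So I will not argue by pure counting. Instead I will use the sharper information that each wrong color meets each admissible circle in \emph{finitely} many points, fewer than \({Y}\). I will also exploit the freedom of a third admissible radius: choose a point \(z\in S_r(p)\cap S_s(q)\) colored \(c\), which exists by the countability argument for all but countably many \(s\). Then apply the dichotomy to circles centered at points of \(\mC\) near \(p\) and \(q\), passing through clusters of such points, to exhibit a single admissible circle carrying at least \({Y}\) points of each of two different colors. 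That would contradict the uniqueness of the large color. I expect making this three-circle configuration work uniformly for arbitrary uncountable \(\r\) to be the delicate part.
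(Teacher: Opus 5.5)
Your preliminary reduction (each color other than \(f(p)\) meets an admissible sphere in fewer than \(Y\), hence finitely many, points, so \(E_r(p)\) is countable) is correct. So are the localization via a condensation point (a harmless variant of the paper's pigeonhole on dyadic intervals), the case \(n\geqslant 3\), and the connectedness step. The gap is the case \(n=2\), which is the case of the original Putnam problem and is left as an unexecuted plan. The plan is also unsound as stated. It needs admissible circles centered at points of \(\mC\) other than \(p\) and \(q\). But your reduction requires \(f(q)=f(p)\) for every \(q\in\mC\) with \(|p-q|<\rho/2\), and \(\mC\) need not contain any further points near \(p\) and \(q\). Only circles centered at \(p\) and \(q\) are guaranteed to be admissible.

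More importantly, your reason for abandoning counting is mistaken. The Sierpi\'nski-type obstruction applies only to relations whose columns are merely countable, and you already have more. If \(f(q)\neq c=f(p)\), then \(S_s(q)\) contains fewer than \(Y\), hence finitely many, points of color \(c\), and each such point lies on exactly one circle \(S_r(p)\). So your columns are \emph{finite}, and countably many co-countable rows already give a contradiction. Concretely:
\begin{enumerate}
\item Fix a countably infinite \(I\subset T\) and let \(A=\bigcup_{r\in I}E_r(p)\), which is countable.
\item The circles \(S_s(q)\), \(s\in T\), are pairwise disjoint, so all but countably many of them miss \(A\). Pick such an \(s\).
\item For each \(r\in I\), both points of \(S_r(p)\cap S_s(q)\) then have color \(c\), and these points are distinct for distinct \(r\).
\item Hence the admissible circle \(S_s(q)\) carries infinitely many points of color \(c\), so at least \(Y\) of them since \(Y\leqslant\aleph_0\), and the forcing condition gives \(f(q)=c\).
\end{enumerate}
This is exactly the paper's argument, where the radius around \(q\) is chosen outside the countable set of distances from \(q\) to \(A\). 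No third circle and no additional centers are needed.
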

	
	\begin{proof}
		Since the uncountable set
		\[
		\r = \bigcup_{m \in \mathbb{Z}} \bigl([2^{m-1}, 2^m] \cap \r \bigr)
		\]
		is a countable union, at least one of these subsets must be uncountable.
		Hence, there exists \(a = 2^{m-1} > 0\) such that
		\([a,2a] \cap \r\) is uncountable.
		
		Let \(p \in \mC\) be an admissible center and let \(B = B_{2a}(p)\).
		If \(B \cap \mC = \{p\}\), then \(\{p\}\) is open in \(\mC\),
		and \(\f\) is locally monochromatic at \(p\).
		
		Otherwise, assume that \(B \cap \mC \neq \{p\}\).
		Choose \(q \in B \cap \mC\) with \(q \neq p\),
		and let \(d = d(p,q) \in (0,2a)\) denote the distance between \(p\) and \(q\).
		Choose \(m \in \mathbb{N}\) such that \(d > a/m = \epsilon\).
		Since the uncountable set
		\[
		[a,2a] \cap \r
		= \bigcup_{k=1}^m \bigl([a+(k-1)\epsilon, a+k\epsilon] \cap \r \bigr)
		\]
		is a finite union, at least one of the subsets must be uncountable.
		Therefore, there exists \(b \in [a,2a)\) such that
		\(\r \cap (b, b+\epsilon)\) is uncountable.
		
		Suppose that \(n=2\), hence, that we are coloring the plane \(\R^2\).
		
		Let \(I \subset \r \cap (b, b+\epsilon)\) be a countable subset.
		For each \(r \in I\), the circle \(C_r(p)= S_{r}(p)\) is admissible.
		For any color \(c \neq \f(p)\), the set
		\(\f^{-1}(c) \cap C_r(p)\) must have cardinality strictly less than \({Y}\),
		otherwise \(\f(p) = c\).
		Hence, the set
		\[
		A = \bigcup_{r \in I} \bigcup_{c \in \cC \setminus \{\f(p)\}}
		\bigl(\f^{-1}(c) \cap C_r(p)\bigr)
		\]
		has cardinality at most \({X} \cdot {Y} \leqslant \aleph_0\),
		and is therefore countable.
		
		Let
		\[
		D = \{ d(q,T) \mid T \in A \}
		\]
		be the set of distances from \(q\) to points of \(A\).
		Then \(D\) is also countable.
		Since \(\r \cap (b,b+\epsilon)\) is uncountable,
		there exists an admissible radius
		\[
		R \in (\r \cap (b,b+\epsilon)) \setminus D .
		\]
		
		Consider the circle \(C_R(q)\).
		For every \(r \in I\), we have
		\[
		|R - r| < \epsilon < d(p,q) < 2a \leqslant 2b <  R + r ,
		\]
		so the circles \(C_R(q)\) and \(C_r(p)\) intersect in exactly two points.
		Because \(C_R(q) \cap A = \emptyset\),
		all such intersection points have color \(\f(p)\).
		Since \(I\) is countable, it follows that
		\[
		\bigl| \f^{-1}(\f(p)) \cap C_R(q) \bigr| \geqslant 2 \aleph_0 = \aleph_0 \geqslant {Y} ,
		\]
		which forces \(\f(q) = \f(p)\).
		
		Suppose now that \(n \geqslant 3\). Let \(r, R \in  \r \cap (b, b+\epsilon)\). The admissible spheres  \(S_r(p)\) and \(S_R(q)\) intersect in an \((n-2)-\)sphere \(S\), and \(S\) has uncountable many points. As a subset of \(S_r(p)\), the sphere \(S\) can have at most countably many points of color other than \(f(p)\), hence, \(S\) has uncountable many points of color \(f(p)\).  Since \(S\) is also a subset of \(S_R(q)\), it follows that \(f(q) = f(p)\).
		
		Thus, for every \(p \in \mC\), there exists an open ball \(B \subset \R^n\)
		such that \(\f(q) = \f(p)\) for all \(q \in B \cap \mC\).
		Since \(B \cap \mC\) is open in \(\mC\),
		the restriction \(\f|_{\mC}\) is locally monochromatic.
		
		If \(\mC\) is connected, then every locally constant function on \(\mC\)
		is constant.
	\end{proof}
	
	\medskip
	
	The next example shows that if \(\mC\) is not connected,
	then \(\f|_{\mC}\) may be locally monochromatic without being globally
	monochromatic.
	
	\begin{example}
		Let \(\cC = \{1,2\}\), let \({Y} \leqslant \aleph_0\),
		let \(\r = (0,1)\), which is uncountable,
		and let
		\[
		\mC = B_1((-2,0)) \cup B_1((2,0)).
		\]
		Define \(\f\) to be constantly \(1\) on \(B_2((-2,0))\),
		constantly \(2\) on \(B_2((2,0))\),
		and arbitrary elsewhere in the plane.
		Then \(\f\) satisfies \(\cQ_{{X},{Y},\mC,\r}\),
		but takes different values on the two connected components of \(\mC\).
		Hence, \(\f|_{\mC}\) is not monochromatic.
		\qed
	\end{example}
	
	The following example shows that even when \(\mC\) is connected and
	\(\f|_{\mC}\) is monochromatic,
	the coloring \(\f \colon \R^2 \to \cC\) need not be somewhere locally monochromatic.
	
	\begin{example}\label{exmp:non_rational}
		Let \(\Q^2\) denote the set of rational points in the plane \(\R^2\),
		and let \(\Omega = \R^2 \setminus \Q^2\).
		Define \(\f \colon \R^2 \to \{0,1\}\) by
		\[
		\f(x) =
		\begin{cases}
			1, & x \in \Q^2,\\
			0, & x \in \Omega.
		\end{cases}
		\]
		For any \({X} \geqslant 2\), \({Y} \geqslant 3\), and \(\r \subset (0,\infty)\),
		the coloring \(\f\) satisfies \(\cQ_{{X},{Y},\mC,\r}\).
		Indeed, if a circle contains at least three rational points,
		then its center is also rational.\footnote{Incidentally, that was problem B1 on the 2008 Putnam Competition.}
		Consequently, if \(p \in \Omega\),
		then for every \(r \in \r\), \(\f^{-1}(1) \cap C_r(p)\) has at most two points and \(\f^{-1}(0) \cap C_r(p)\)  is infinite,
		so \(\f(p) = 0\).
		
		The induced topological space \(\mC\) is connected,
		in fact path-connected,
		and \(\f|_{\mC}\) is constant, consistent with Theorem~\ref{thm2}.
		However, both color classes \(\f^{-1}(1) = \Q^2\) and \(\f^{-1}(0) = \Omega\) are dense in \(\R^2\),
		so \(\f\) is not constant on any nonempty open disk in \(\R^2\).
		
		Note that if \(p \in \Q^2\) and \(r > 0\), then \(\f(p) = 1\) and
		\(\f^{-1}(0) \cap C_r(p)\) is infinite.
		This does not contradict \(\cQ_{{X},{Y},\mC,\r}\),
		since the condition applies only to circles with centers in \(\mC =  \R^2 \setminus \Q^2\).
		\qed
	\end{example}

	\medskip

	\section{Rigid Geometric Properties}\label{sec:rigid_properties}
	In later sections we will study properties \(\cPx\) based on geometric conditions that are invariant under isometries, 
	and in this section we summarize several useful results - for more details, see \cite{Lee} or \cite{Warner}.
	
	Every \emph{rigid motion} (orientation-preserving isometry) of the Euclidean space
	\( \R^n \) can be written as the composition of a translation and a rigid motion that fixes the
	origin \( O \). Moreover, every rigid motion that fixes the
	origin restricts to a rigid motion of the unit sphere
	\[
	S^{n-1}=S_1(O)\subset \R^n .
	\]
	
	The group of rigid motions of the unit sphere \( S^{n-1} \) is the
	\emph{special orthogonal} Lie group \( \SO(n) \), and the group of rigid motions
	of \( \R^n\) is the \emph{special Euclidean} group
	\[
	\SE(n)\simeq \SO(n)\ltimes \R^n,
	\]
	a semidirect product of \( \SO(n) \) and the Abelian group \( \R^n\).
	
	The standard left action of \( \SE(n) \) on \( \R^n \),
	\[
	\Phi \colon \SE(n)\times \R^n \to \R^n,
	\qquad (a,p)\longmapsto a\cdot p := a(p),
	\]
	is smooth and transitive. For every point \( p\in \R^n \), the \emph{stabilizer}
	of \( p \) is the closed Lie subgroup
	\[
	G=\Stab(p)=\{a\in \SE(n)\mid a\cdot p=p\}\simeq \SO(n).
	\]
	
	For every sphere \( S=S_R(p) \) centered at \( p \), the action of \( \SE(n) \)
	on \( \R^n \) induces a smooth and transitive action
	\[
	G\times S \to S .
	\]
	
	\begin{lemma}\label{lem:cont_open}
		Let \( q\in S \) and define
		\[
		\phi \colon G\to S, \qquad \phi(a)=a\cdot q .
		\]
		Then \( \phi \) is continuous, open, and surjective.
	\end{lemma}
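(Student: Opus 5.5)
Continuity and surjectivity are the easy parts, so I will dispose of them first and spend the real effort on openness.

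For continuity, \(\phi\) is the composition of the inclusion \(G\to G\times\{q\}\subset G\times S\) with the restricted action \(G\times S\to S\). The action \(\Phi\) is smooth, so \(\phi\) is smooth, in particular continuous. For surjectivity, the induced action \(G\times S\to S\) is transitive, as recorded above: for any \(x\in S\) there is \(a\in G\) with \(a\cdot q=x\). Concretely, in coordinates with \(p=O\), \(G\simeq\SO(n)\), and for \(n\geqslant 2\) the group \(\SO(n)\) acts transitively on spheres centered at \(O\). One can rotate in the plane spanned by \(q\) and \(x\), or use a reflection composed with another reflection.

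For openness I plan to use the homogeneous-space structure. Let \(H=\Stab_G(q)\), a closed subgroup of \(G\) (isomorphic to \(\SO(n-1)\)). By the standard theorem on transitive smooth actions (see \cite{Lee}), the induced map \(\tilde\phi\colon G/H\to S\), \(aH\mapsto a\cdot q\), is a diffeomorphism. We have \(\phi=\tilde\phi\circ\pi\), where \(\pi\colon G\to G/H\) is the quotient map. The map \(\pi\) is open, because for \(U\subset G\) open, \(\pi^{-1}(\pi(U))=\bigcup_{h\in H}Uh\) is a union of open sets. A diffeomorphism is open, so \(\phi\) is open as a composition of open maps.

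The main obstacle is justifying that \(\tilde\phi\) is a homeomorphism rather than merely a continuous bijection. To keep the argument self-contained, I would use compactness: \(G\simeq\SO(n)\) is compact, so \(G/H\) is compact. The sphere \(S\) is Hausdorff, so the continuous bijection \(\tilde\phi\) from a compact space onto a Hausdorff space is a homeomorphism. Continuity of \(\tilde\phi\) follows from the universal property of the quotient topology, and injectivity holds because \(a\cdot q=b\cdot q\) if and only if \(b^{-1}a\in H\).

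An alternative route would be to show that \(\phi\) is a smooth submersion, since its differential at the identity is onto \(T_qS\) by rotations in the planes spanned by \(q-p\) and tangent vectors, and then use equivariance at other points. I would use this only if the compactness argument needed supplementing.
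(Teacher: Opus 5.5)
Your proof is correct and takes the same route as the paper: you factor \(\phi=\tilde\phi\circ\pi\) through the quotient \(G\to G/\Stab_G(q)\), use that \(\tilde\phi\colon G/\Stab_G(q)\to S\) is a diffeomorphism (a homeomorphism suffices), and get openness from the openness of the quotient projection. Your explicit check that \(\pi\) is open via \(\pi^{-1}(\pi(U))=\bigcup_{h\in H}Uh\), and your compact-to-Hausdorff argument that \(\tilde\phi\) is a homeomorphism, fill in details that the paper takes as standard.
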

	
	\begin{proof}
		The \( G \)-orbit of \( q \) is
		\[
		\phi(G)=\Orbit_G(q)=\{a\cdot q\mid a\in G\}=S,
		\]
		so \( \phi \) is surjective. The stabilizer of \( q \) in \( G \) is
		\[
		\phi^{-1}(q)=\Stab_G(q)=\{a\in G\mid a\cdot q=q\}\simeq \SO(n-1).
		\]
		The natural map
		\[
		G/\Stab_G(q)\longrightarrow \Orbit_G(q)=S,
		\qquad [a]\longmapsto a\cdot q,
		\]
		is a diffeomorphism. Under this identification, \( \phi \) coincides with the
		quotient projection \( G\to G/\Stab_G(q) \). Hence \( \phi \) is continuous,
		open, and surjective.
	\end{proof}
	
	\begin{definition}
		A property \(\cPx\) of sets in \(\R^n\) is a \emph{rigid geometric property} if for every set \(\Gamma \subset \R^n\) that has property \(\cPx\) 
		and rigid motion \({a} \in \SE(n)\), any set that contains \({a}\cdot \Gamma\) has property \(\cPx\).
	\end{definition}
	
	The rigid geometric properties \(\cPx\) that we will consider will be based on \emph{simplices}, and we introduce some terminology at this point - see \cite{Grunbaum} for details.
	
	\begin{definition}
		An \emph{\(m-\)simplex} \(\sigma = \sigma^m\) in \(\R^n\) \((n\geqslant m)\) is the convex hull of \(m+1\) affinely independent points, which are called the \emph{vertices} of \(\sigma^m\). An \(n-\)simplex in \(\R^n\) is called a \emph{full-dimensional simplex}.
	\end{definition}
	
	For example, a 1-simplex is a segment and a 2-simplex is a (solid) triangle. A triangle is a full-dimensional simplex in \(\R^2\), but not in \(\R^3\). The notions of equilateral, isosceles, and right triangles extend to higher dimensional simplices. A simplex is:
	\begin{enumerate}
		\item a \emph{regular simplex} (or \emph{equilateral}) if the Euclidean distances between any two distinct vertices are all equal.
		\item an \emph{isosceles simplex} if there exists a vertex \(p\) such that all the distances from \(p\) to the other vertices of the simplex are equal - such a vertex \(p\) is called an \emph{apex} of the isosceles simplex.
		\item a \emph{right simplex} if there exists a vertex \(p\) such that the lines joining \(p\) with the other vertices are mutually orthogonal - such a vertex \(p\) is called an \emph{apex} of the right simplex.
	\end{enumerate}
	
	For every full-dimensional simplex \(\sigma^n\) in \(\R^n\), there exists a unique sphere \(S_R(p) \subset \R^n\) that contains all the vertices of the simplex. That sphere is called the \emph{circumsphere}, the center \(p\) is the \emph{circumcenter}, and the radius \(R\) is the \emph{circumradius}. For full-dimensional regular simplices and for right simplices in the two-dimensional space, the circumcenter is within the simplex, but that need not be the case for isosceles simplices, and is not the case for right simplices in dimensions greater that two. 
	
	The property of a simplex in \(\R^n\) to be equilateral, isosceles, or right are invariant under rigid motions of \(\R^n\). An \(m-\)simplex \(\sigma\) in \(\R^n\) has a nonzero \(m-\)volume \({\vol}_m(\sigma)\), and the \(m-\)volume is also invariant under rigid motions of \(\R^n\). Examples of rigid geometric properties that we will consider include:
	\begin{itemize}
		\begin{shaded}
			\item[\(\cPx_{(IS^m)}\):] A set in \(\R^n\) has this property if it contains the vertices of an isosceles \(m-\)simplex.
		\end{shaded}
		
		\begin{shaded}
			\item[\(\cPx_{(ES^m)}\):] A set in \(\R^n\) has this property if it contains the vertices of a regular \(m-\)simplex.
		\end{shaded}
		
		\begin{shaded}
			\item[\(\cPx_{(RS^m)}\):] A set in \(\R^n\) has this property if it contains the vertices of a right \(m-\)simplex.
		\end{shaded}
		
		\begin{shaded}
			\item[\(\cPx_{(\vol_m, \V)}\):] A set in \(\R^n\) has this property if it contains the vertices of an \(m-\)simplex with \(m-\)volume in a fixed set \(\V\) of admissible values.
		\end{shaded}
		
		\begin{shaded}
			\item[\(\cPx_{k \S N}\):] A set in \(\R^n\) has this property if it contains the vertices of an \(n-\)simplex with at least \(k\) of the \(N=\binom{n+1}{2}\) edge lengths in a fixed set \(\S\) of admissible values.
		\end{shaded}
		
	\end{itemize}
	
	\medskip
	
	\section{Somewhere Comeager Colorings}\label{sec:somewhere_comeager}
	
	In the next sections we study colorings satisfying conditions \(\cQ\) based on
	properties \(\cPx\) involving simplices of specific shapes.
	
	Ceder \cite{Ceder} showed that the plane can be decomposed into countably many
	sets, none of which contains the vertices of an equilateral triangle.
	Assuming the Continuum Hypothesis, Davies \cite{Davies} (\(n=2\)), and Kunen \cite{Kunen} (all \(n\)),  
	constructed partitions of \(\R^n\) into countably many sets, none of which contains the vertices of an
	isosceles triangle.
	Erd\H{o}s and Komj\'ath \cite{Erdos} obtained the same conclusion for \(n=2\) under Martin’s
	Axiom, and Schmerl \cite{Schmerl2} proved the same result for all \(n\), without additional set-theoretic hypotheses.
	Erd\H{o}s and Komj\'ath \cite{Erdos}, with a proof completed in
	Bursics--Komj\'ath \cite{Bursics} and an alternative approach given in
	\cite{Schmerl3}, showed that assuming the Continuum Hypothesis, the plane can be
	colored with countably many colors so that no color class contains the vertices
	of a right triangle.
	
	In all of these cases, coloring each set of the decomposition with a different
	color yields colorings using countably many colors that admit no monochromatic
	triangles of the specified type.
	For each such coloring, the corresponding conditions \(\cQ\) are logically
	satisfied.
	However, since every nonempty open ball contains isosceles, equilateral, and
	right triangles, no monochromatic open ball can exist.
	Thus, without additional regularity assumptions on the color classes, these
	conditions cannot imply the existence of a monochromatic nonempty open ball.
	
	The regularity condition that we impose on colorings of the space \(\R^n\) is that they be \emph{somewhere comeager colorings} - see Definition~\ref{def:somewhere_comeager} below. For self-containment, we include some basic definitions, properties, and results related to Baire Category here - for more details, see, for example, \cite{Oxtoby}, \cite{Kechris}.
	
	A topological space \(\cX\) is \emph{second-countable} if it has a countable basis - this will be the case in our applications.  A set \(A \subset \cX\) is \emph{nowhere-dense} if \(int(\overline{A})=\emptyset\) - that is, if the interior of the closure is empty. A set is \emph{meager} if it can be written as the countable union of nowhere-dense sets, and otherwise is \emph{nonmeager}. A set \(A\) is \emph{comeager} if its complement \(\cX \setminus A\) is meager.  A set \(A\) \emph{has the Baire property} if there is an open set \(U\) such that the symmetric difference \(M = A \Delta U \) is meager - equivalently, if \(A = M\Delta U\) is the symmetric difference of a meager set and an open set.
	
	A nonempty subset \(\mathscr{U} \subset \cX\) is a topological space with the induced topology. A set \(A\subset \cX\) is \emph{comeager in \(\mathscr{U}\)}  if \(A\cap \mathscr{U}\) is comeager in \(\mathscr{U}\). In particular, a set \(S \subset (0,\infty) \) is \emph{comeager near 0} if there exists \(\epsilon > 0\) such that \(S\) is comeager in \((0,\epsilon)\).
	
	A set \(A \subset \cX\) is \emph{somewhere comeager} if there exists a nonempty open set \(U \subset \cX\) such that \(A\) is comeager in \(U\). If \(A\subset \R^{n}\) has the Baire property, then \(A\) is nonmeager if and only if \(A\) is somewhere comeager. 
	
	\begin{definition}\label{def:somewhere_comeager}
		A coloring \(\f \colon \R^n \to \cC\) is called a \emph{somewhere comeager coloring} if there exists a color \(c\in \cC\) such that the color class \(\f^{-1}(c)\) is somewhere comeager in \(\R^n\).
	\end{definition}
	
	\begin{proposition}
		If \(\f \colon \R^n \to \N\) is a coloring such that all color classes have the Baire property, then \(\f\) is a somewhere comeager coloring.
	\end{proposition}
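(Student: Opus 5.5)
The plan is to combine the Baire category theorem with the countability of the set of colors. Because \(\f\) takes values in \(\N\), the color classes \(\f^{-1}(c)\), \(c \in \N\), form a countable partition of \(\R^n\):
\[
\R^n = \bigcup_{c\in\N} \f^{-1}(c).
\]
Since \(\R^n\) is a complete metric space, the Baire category theorem tells us that \(\R^n\) is nonmeager. A countable union of meager sets is meager, so at least one color class \(\f^{-1}(c_0)\) must be nonmeager. (Empty classes are meager and cause no difficulty.)

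Next, by hypothesis \(\f^{-1}(c_0)\) has the Baire property. We then invoke the fact recorded above: a subset of \(\R^n\) with the Baire property is nonmeager if and only if it is somewhere comeager. To be self-contained, I would verify the direction we need directly. Write \(\f^{-1}(c_0) = U \,\Delta\, M\) with \(U\) open and \(M\) meager. If \(U\) were empty, then \(\f^{-1}(c_0) = M\) would be meager, a contradiction; so \(U \neq \emptyset\). Moreover,
\[
U \setminus \f^{-1}(c_0) \subseteq M
\]
is meager in \(\R^n\). The remaining task is to check that it is also meager in the subspace \(U\): a set that is nowhere dense in \(\R^n\) and contained in the open set \(U\) is nowhere dense in \(U\), and this passes to countable unions. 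Hence \(\f^{-1}(c_0)\) is comeager in \(U\), so \(\f\) is a somewhere comeager coloring in the sense of Definition~\ref{def:somewhere_comeager}.

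I do not expect a real obstacle. The only points needing care are that \(\N\) is countable (this is exactly what makes Baire's theorem applicable) and the small subspace-meagerness check just described.
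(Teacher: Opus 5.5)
Your proposal is correct and follows the paper's proof: the countability of \(\N\) together with the Baire category theorem yields a nonmeager color class, and the Baire property then makes that class somewhere comeager. The only difference is that you verify the ``nonmeager with the Baire property implies somewhere comeager'' step yourself, including the subspace-meagerness check in \(U\), whereas the paper simply cites this fact.
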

	
	\begin{proof}
		Since \(\N\) is countable and \(\R^n\) is nonmeager, there exists a color \(c\) such that the color class \(\f^{-1}(c)\) is nonmeager. Since \(\f^{-1}(c)\) is nonmeager and has the Baire property, it follows that \(\f^{-1}(c)\) is somewhere comeager.
	\end{proof}

	We will use the following consequence of the Kuratowski-Ulam theorem.
	
	\begin{lemma}\label{lem:polarKU}
		Let \(p \in \R^n\) and \(R > 0\). If \(A \subset B_R(p)\) is comeager in \(B_R(p)\), then the set
		\[
		G_p = \{ r \in (0,R) \mid A \text{ is comeager in } S_r(p)\footnote{for the induced topology on \(S_r(p)\).} \}
		\]
		is comeager in \((0,R)\). In particular, \(G_p\) is infinite.
	\end{lemma}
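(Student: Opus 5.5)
The idea is to pass to polar coordinates centered at \(p\) and then apply the Kuratowski--Ulam theorem to a product of a radial interval and the unit sphere. Consider the map
\[
\Psi \colon (0,R)\times S^{n-1}\to B_R(p)\setminus\{p\},\qquad \Psi(r,u)=p+ru .
\]
It is a homeomorphism; in fact it is a diffeomorphism with inverse \(x\mapsto (|x-p|,(x-p)/|x-p|)\). Homeomorphisms preserve nowhere-dense, meager and comeager sets. Removing the single point \(p\), which is nowhere dense in \(B_R(p)\) since \(n\geqslant 2\), does not affect comeagerness: \(A\setminus\{p\}\) is comeager in \(B_R(p)\setminus\{p\}\), because a nowhere-dense subset of \(B_R(p)\) meets the open subspace \(B_R(p)\setminus\{p\}\) in a nowhere-dense set. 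Hence \(E=\Psi^{-1}(A\setminus\{p\})\) is comeager in \((0,R)\times S^{n-1}\).

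Both factors are second-countable Baire spaces: \((0,R)\) is an open interval and \(S^{n-1}\) is a compact manifold. The Kuratowski--Ulam theorem, in its comeager form \cite{Kechris,Oxtoby}, applies to any set with the Baire property, and a comeager set has the Baire property. It therefore gives that
\[
\{\, r\in(0,R) \mid E_r=\{u\in S^{n-1} : (r,u)\in E\}\text{ is comeager in } S^{n-1}\,\}
\]
is comeager in \((0,R)\).

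Next, for each fixed \(r\), the map \(u\mapsto p+ru\) is a homeomorphism from \(S^{n-1}\) onto \(S_r(p)\). It carries \(E_r\) onto \(A\cap S_r(p)\), so \(E_r\) is comeager in \(S^{n-1}\) exactly when \(A\) is comeager in \(S_r(p)\) for the induced topology. Thus the set above coincides with \(G_p\), and \(G_p\) is comeager in \((0,R)\).

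Finally, \(G_p\) is infinite. A finite subset of \((0,R)\) is closed with empty interior, hence nowhere dense and so meager. If \(G_p\) were finite, then \((0,R)\) would be the union of the meager set \(G_p\) and the meager set \((0,R)\setminus G_p\), hence meager itself. This contradicts the Baire category theorem for the open interval, which also shows \(G_p\) is nonempty and dense.

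The only step needing care is the transfer of the Kuratowski--Ulam theorem through \(\Psi\). One must check that comeagerness survives the removal of the center \(p\) and the passage to the homeomorphic product, and that the theorem is used in the version for second-countable (not just Euclidean) factors. Everything else is formal.
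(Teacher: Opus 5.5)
Your proof is correct and follows the paper's argument essentially verbatim: pass to polar coordinates via the homeomorphism \((0,R)\times S^{n-1}\to B_R(p)\setminus\{p\}\), apply Kuratowski--Ulam to the pulled-back comeager set, and identify the resulting set of radii with \(G_p\). You also justify two points the paper leaves implicit, namely that removing the center preserves comeagerness and that \(G_p\) is infinite (via the Baire category theorem on \((0,R)\)).
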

	
	\begin{proof}
		Consider the punctured open ball
		\[
		D = \{ p + r q \mid 0 < r < R,\ q \in S^{n-1} \}.
		\]
		Let \(\Phi \colon (0,R) \times S^{n-1} \to D\),
		defined by \(\Phi(r,q) = p + r q\). Then \(\Phi\) is a homeomorphism, and 
		since \(A \setminus \{p\}\) is comeager in \(D\),
		its inverse image  \( \Phi^{-1}(A \setminus \{p\})\) is comeager in
		\((0,R) \times S^{n-1}\).
		By the Kuratowski-Ulam theorem, the set
		\[
		\{ r \in (0,R) \mid \{ q \mid (r,q) \in \Phi^{-1}(A \setminus \{p\} \} \text{ is comeager in } S^{n-1} \}
		\]
		is comeager in \((0,R)\). Under the homeomorphism \(\Phi\), this set corresponds exactly to \(G_p\),
		which completes the proof.
	\end{proof}

	For the remainder of this paper we consider colorings	\(\f \colon \R^n~\to~\cC \subset \R\)  that are \emph{somewhere comeager},  and
	conditions \(\cQ\) for which all points of the space are admissible centers
	\((\mC = \R^n)\) and for which the set \(\r\) of admissible radii is \emph{comeager near
		\(0\)}.
	
	\begin{remark}
		To streamline notation, we write
		\(\cQ_{\cPx,\r^\epsilon}^* = \cQ_{\cC,\cPx,\R^n,\r}\) to indicate that the above conditions
		are satisfied. Accordingly, the statement that \emph{\(\f\) satisfies
			\(\cQ_{\cPx,\r^\epsilon}^*\)} means that:
		\begin{enumerate}
			\item \(\f \colon \R^n \to \cC\) is a somewhere comeager coloring.
			\item \(\f\) satisfies \(\cQ_{\cC,\cPx,\R^n,\r}\) with \(\r\)
			comeager in \((0,\epsilon)\).
		\end{enumerate}
	\end{remark}
	
	\begin{definition}
		A property \(\cPx\) is a \emph{\ucp} if there exists \(\delta \in (0, 1)\) such that for every admissible sphere \(S= S_r(p)\) and every \(x \in S\), the open cap \(B_{\delta r\sqrt{2}}(x) \cap S_r(p)\) has the property \(\cPx\). A property \(\cPx\) is a \emph{countable {\ucp}} if the open cap \(B_{\delta r\sqrt{2}}(x) \cap S_r(p)\) contains a countable subset that has property \(\cPx\).
	\end{definition}
	
	The definition above states that a property \(\cPx\) is a {\ucp} if every open hemisphere \(B_{r\sqrt{2}}(x) \cap S_r(p)\) of an admissible sphere \(S_r(p)\) 
	contains a proper cap - of radius \(\delta r\sqrt{2}\) - having property \(\cPx\), and the relative size (radius) of that cap is uniformly bounded on all 
	admissible spheres.
	
	The main result of this section is the following theorem.
	
	\begin{theorem}\label{thm:general_formQ}
		Let \(\f\) be a coloring that satisfies \(\cQ_{\cPx,\r^\epsilon}^*\).
		\begin{enumerate}
			\item If every comeager subset of any admissible sphere has property \(\cPx\), then \(\f\) is
			somewhere locally monochromatic.
			\item If \(\f\) is somewhere locally monochromatic and \(\cPx\) is a {\ucp}, then \(\f\) is monochromatic.
		\end{enumerate}
	\end{theorem}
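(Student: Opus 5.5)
For part (1), the plan is to produce a monochromatic open ball directly from the comeager color class. Since \(\f\) is somewhere comeager, there is a color \(c\) and a nonempty open set \(U\) with \(\f^{-1}(c)\) comeager in \(U\). Shrinking \(U\), we may take a ball \(B_\rho(x_0)\subset U\). Set \(R=\min(\rho/2,\epsilon)\) and \(W=B_{R}(x_0)\); we aim to show \(\f\equiv c\) on \(W\). Fix \(p\in W\). Then \(B_R(p)\subset B_\rho(x_0)\), so \(A=\f^{-1}(c)\cap B_R(p)\) is comeager in \(B_R(p)\). By Lemma~\ref{lem:polarKU}, the set \(G_p\) of radii \(r\in(0,R)\) for which \(A\) is comeager in \(S_r(p)\) is comeager in \((0,R)\). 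The set \(\r\cap(0,R)\) is also comeager in \((0,R)\), because \(R\le\epsilon\) and \(\r\) is comeager in \((0,\epsilon)\). Two comeager subsets of the Baire space \((0,R)\) intersect, so we can choose \(r\in G_p\cap\r\). Then \(S_r(p)\) is admissible, since all centers are admissible. Moreover \(\f^{-1}(c)\cap S_r(p)\) is comeager in \(S_r(p)\), so by hypothesis it has property \(\cPx\). Condition \(\cQ\) then gives \(\f(p)=c\). Hence \(\f\) is constant on the open ball \(W\). This part is routine once Lemma~\ref{lem:polarKU} is available.

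For part (2), suppose \(\f\equiv c\) on some open ball. Let \(M\) be the union of all open balls on which \(\f\equiv c\), which is open and nonempty. The goal is to show \(M=\R^n\), using connectedness: we show \(M\) is closed. Take \(q\in\overline{M}\) and an admissible radius \(r\in\r\cap(0,\epsilon)\). The key geometric step is to find, for each point \(y\) near \(q\), an admissible sphere \(S_r(y)\) and a point \(x\in S_r(y)\) whose cap \(B_{\delta r\sqrt2}(x)\cap S_r(y)\) lies inside \(M\). The uniform-cap property then makes that cap contain a \(c\)-monochromatic set with property \(\cPx\); by the definition of a rigid geometric property, this persists for any set containing it. Condition \(\cQ\) then forces \(\f(y)=c\).

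To arrange the cap, we first pick a ball \(B_s(m)\subset M\) with \(m\) close to \(q\). We then choose \(r\) and the centers \(y\) so that \(d(y,m)\) is a value \(r\) in \(\r\). The cap of radius \(\delta r\sqrt2\) around the point \(x=m\) must fit inside \(B_s(m)\), which needs \(\delta r\sqrt2<s\). This is the main obstacle: the cap size scales with \(r\), while \(s\) may be small. My first attempt to resolve it is to use that \(\r\) is comeager near \(0\), so arbitrarily small admissible \(r\) exist, and to work with radii comparable to \(s\).

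The refined plan is an expansion argument. Instead of just closedness, we show that \(M\) contains \(B_{s(1+\eta)}(m)\) whenever it contains \(B_s(m)\), for a fixed \(\eta>0\) depending only on \(\delta\). Take \(y\) with \(s\le d(y,m)< s(1+\eta)\) and choose \(r\in\r\) near \(d(y,m)\) such that the sphere \(S_r(y)\) passes deep enough into \(B_s(m)\). The relevant cap is centred at the point of \(S_r(y)\) on the ray toward \(m\). Its chordal radius \(\delta r\sqrt2\) is less than \(\sqrt2 r\), the hemisphere size, and a sphere of radius about \(s\) dips into \(B_s(m)\) along a cap of comparable relative size. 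Choosing \(r\) slightly larger than \(d(y,m)\) makes the cap lie inside \(B_s(m)\); this is where \(\delta<1\) is used, so the cap is strictly smaller than a hemisphere. The needed choices of \(r\) lie in an interval, so comeagerness, hence density, of \(\r\) near \(0\) supplies them as long as \(s\le\epsilon/2\). Once \(s\) exceeds this, we rerun the argument around shifted smaller balls inside \(M\) to propagate the covering. Iterating gives \(M\supset\) every ball, so \(\f\) is monochromatic. I expect the precise cap-inclusion inequality to be the delicate computation.
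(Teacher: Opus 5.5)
Your part (1) is the paper's argument (Lemma~\ref{lem:polarKU}, intersect \(G_p\) with \(\r\), then apply the hypothesis and \(\cQ\)), written a little more carefully with the uniform ball \(W\). It is fine.

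Part (2) has a genuine gap, at exactly the step you deferred as the ``delicate computation''. You claim that choosing \(r\) slightly larger than \(d(y,m)\) puts the cap \(B_{\delta r\sqrt2}(x)\cap S_r(y)\) inside \(B_s(m)\). This is false unless \(\delta<1/\sqrt2\).

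To see this, write \(d=d(y,m)\geqslant s\). For \(z\in S_r(y)\), let \(\theta\) be the angle at \(y\) between \(z-y\) and \(m-y\). Then \(|z-x|<\delta r\sqrt2\) iff \(\cos\theta>1-\delta^2\), and \(|z-m|<s\) iff \(\cos\theta>(r^2+d^2-s^2)/(2rd)\). So the inclusion you need is \((r^2+d^2-s^2)/(2rd)\leqslant 1-\delta^2\). But for \(r\geqslant d\geqslant s\) the left side is at least \(1/2\), because \(r^2-rd+d^2\geqslant d^2\geqslant s^2\), with equality only at \(r=d=s\). Geometrically, a sphere of radius comparable to \(s\), centred outside \(B_s(m)\), meets \(B_s(m)\) in a cap of angular radius at most \(\pi/3\). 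That is nowhere near a hemisphere, so \(\delta<1\) is not enough.

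Your first attempt hits the same threshold. With \(d(y,m)=r\) and \(\delta r\sqrt2\leqslant s\), the point \(y\) lies outside \(B_s(m)\) only if \(\delta\leqslant 1/\sqrt2\). This restriction matters: the definition allows any \(\delta\in(0,1)\), and the paper's application to right simplices in \(\R^3\) and \(\R^4\) uses \(\delta>\sqrt{2/n}\geqslant 1/\sqrt2\).

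The missing idea is that the forcing radius must be \emph{small relative to} the radius of the monochromatic ball. Then the ball's boundary looks flat at scale \(r\), and \(S_r(y)\cap B_s(m)\) is nearly a hemisphere for \(y\) near \(\partial B_s(m)\); this is where \(\delta<1\) is really used. Quantitatively, for \(d=s\) your inequality reads \(r\leqslant 2s(1-\delta^2)\). In general the valid \(r\) form an interval centred at \(d(1-\delta^2)\), which is nondegenerate iff \(d^2\bigl(1-(1-\delta^2)^2\bigr)<s^2\).

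So your fixed-ratio expansion can be rescued with some \(\eta=\eta(\delta)>0\) by taking \(r\approx d(1-\delta^2)\) instead of \(r\approx d\). Density of \(\r\) in \((0,\epsilon)\) then supplies such \(r\) for small \(s\), and your shifted-ball device handles large \(s\).

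The paper does the equivalent with a supremum argument. It takes \(B_R(p)\) to be the maximal monochromatic ball and \(q\in S_R(p)\), and fixes a small admissible \(r\) so that \(S_r(q)\cap B_R(p)\) contains a cap of radius \(\frac{1+\delta}{2}r\sqrt2\). It then pushes \(q\) outward by an amount \(t_0\) that is uniform by rotational symmetry.

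Finally, you should not invoke rigidity of \(\cPx\), which this theorem does not assume. You also do not need it: \(\cQ\) only requires that \(\f^{-1}(c)\cap S_r(y)\) contain a subset with property \(\cPx\), and the cap itself is such a subset.
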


	\begin{proof}
		(1) Since \(\f\) is somewhere comeager, there exist a color \(c \in \cC\) and a nonempty open
		set \(U \subset \R^n\) such that  \(\f^{-1}(c)\) is comeager in \(U\).  
		
		Let \(p \in U\) and \(R>0\) such that \(B_R(p) \subset U\). Then \(\f^{-1}(c)\) is comeager in \(B_R(p)\) 
		and from Lemma~\ref{lem:polarKU} we conclude that the set
		\[
		G_p = \{ r \in (0,R) \mid \f^{-1}(c) \text{ is comeager in } S_r(p) \}
		\]
		is comeager in \((0,R)\). 
		
		Let \(\delta = \min(R,\epsilon)\).
		Both \(G_p\) and \(\r \) are comeager in
		\((0,\delta)\), so their intersection,
		\[
		A_p = \{ r\in (0,\delta)\cap \r \mid \f^{-1}(c) \text{ is comeager in } S_r(p)\}
		\]
		is also comeager in \((0,\delta)\), hence nonempty. 
		Thus, there exists an \emph{admissible} radius \(r\) such that the set 
		\(\f^{-1}(c) \) is comeager in \(S_r(p)\), an admissible sphere.
		By hypothesis, this comeager set has property \(\cPx\), and since \(\f\) satisfies
		\(\cQ_{\cPx,\r^\epsilon}^*\), it follows that \(\f(p) = c\).
		As this holds for every \(p \in U\), the coloring \(\f\) is constant on the open
		set \(U\), and hence \(\f\) is somewhere locally monochromatic.
		
		(2) Suppose that \(\f\) is somewhere locally monochromatic.
		Then there exists a monochromatic open nonempty ball \(B_{r_0}(p)\).
		Let
		\[
		R = \sup \{ r > 0 \mid B_r(p) \text{ is monochromatic} \}.
		\]
		Clearly \(R \geqslant r_0 > 0\).
		
		Assume, for contradiction, that \(R < \infty\). Let \(q\in S_R(p)\). For a small enough admissible radius \(r>0\), the intersection \(S_r(q) \cap B_R(p)\) is as close to a hemisphere on \(S_r(q)\) as we want - in particular, we can choose \(r\) so that the intersection contains an open cap of radius \(\frac{1+\delta}{2} r \sqrt{2}\). Keeping \(r\) fixed and moving \(q\) slightly away from \(p\) would yield a bit smaller intersection \(S_r(q) \cap B_R(p)\), but we can still keep it as close to a hemisphere on \(S_r(q)\) as we want - to contain an open cap of radius \(\delta r\sqrt{2}\)  - by staying close enough, \(d(p,q) < R+t_0\) for some suitably small $t_0>0$. Then \(S_r(q)\) has a monochromatic open cap of radius \(\delta r \sqrt{2}\), and that cap has property \(\cPx\), therefore, \(f(q) = f(p)\). We conclude that \(B_{R+t_0}(p)\) is monochromatic, contradicting the definition of \(R\) as the supremum.

		Therefore, \(R = \infty\), and \(\f\) is monochromatic on \(\R^n\).
	\end{proof}
	
	Theorem~\ref{thm:general_formQ} has the following consequence for rigid geometric properties.
	
	\begin{theorem}\label{thm:rigid_Q}
		Let \(\f\) be a coloring that satisfies \(\cQ_{\cPx,\r^\epsilon}^*\) for a  rigid geometric property \(\cPx\).
		\begin{enumerate}
			\item If every admissible sphere contains a countable subset  that has property \(\cPx\), then \(\f\) is somewhere locally monochromatic.
			\item  If \(\cPx\) is a countable {\ucp}, then \(\f\) is monochromatic.
		\end{enumerate}
	\end{theorem}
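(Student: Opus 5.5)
We will deduce both parts from Theorem~\ref{thm:general_formQ}. The only new ingredient is a genericity statement: if \(S=S_r(p)\) is admissible, \(A\subset S\) is comeager in \(S\), and \(\Gamma\subset S\) is countable, then some rotation \(a\in G=\Stab(p)\) satisfies \(a\cdot\Gamma\subset A\). Because \(\cPx\) is a rigid geometric property and \(A\supseteq a\cdot\Gamma\) with \(\Gamma\) having \(\cPx\), it follows that \(A\) has \(\cPx\).

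To prove the genericity statement, write \(\Gamma=\{q_1,q_2,\dots\}\). For each \(i\) let \(\phi_i\colon G\to S\), \(\phi_i(a)=a\cdot q_i\), as in Lemma~\ref{lem:cont_open}. Write \(S\setminus A=\bigcup_j N_j\) with each \(N_j\) nowhere dense in \(S\).

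\begin{itemize}
\item The key step is that \(\phi_i^{-1}(N_j)\) is nowhere dense in \(G\). Suppose \(V\subset G\) is nonempty open. Then \(\phi_i(V)\) is a nonempty open subset of \(S\), since \(\phi_i\) is open. Hence \(\phi_i(V)\) contains a nonempty open \(W\) with \(W\cap\overline{N_j}=\emptyset\).
\item By continuity, \(V\cap\phi_i^{-1}(W)\) is open, and it is nonempty because \(W\subset\phi_i(V)\).
\item This set misses \(\phi_i^{-1}(\overline{N_j})\), which is closed and contains \(\phi_i^{-1}(N_j)\). So every nonempty open \(V\) contains a nonempty open set disjoint from \(\phi_i^{-1}(N_j)\), which is what nowhere dense means.
\end{itemize}

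Consequently \(\bigcup_{i,j}\phi_i^{-1}(N_j)\) is meager in \(G\). Since \(G\cong\SO(n)\) is a compact manifold, it is a Baire space, so this union is not all of \(G\). Any \(a\) outside it satisfies \(a\cdot q_i\in A\) for every \(i\). Openness of \(\phi_i\) is exactly where Lemma~\ref{lem:cont_open} is needed, and this is the main point of the argument. Countability of \(\Gamma\) is essential, because it keeps the union of preimages countable and hence meager.

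For part (1): every admissible sphere contains a countable set with \(\cPx\). By the genericity statement, every comeager subset of every admissible sphere then has \(\cPx\). Theorem~\ref{thm:general_formQ}(1) now gives that \(\f\) is somewhere locally monochromatic.

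For part (2): a countable {\ucp} in particular provides a countable set with \(\cPx\) inside some cap of each admissible sphere. So part (1) applies and \(\f\) is somewhere locally monochromatic. Moreover, every open cap \(B_{\delta r\sqrt2}(x)\cap S_r(p)\) contains a countable subset with \(\cPx\). By the rigid-motion invariance built into the definition of a rigid geometric property, any set containing that cap has \(\cPx\), so the cap itself has \(\cPx\). Thus \(\cPx\) is a {\ucp}, and Theorem~\ref{thm:general_formQ}(2) yields that \(\f\) is monochromatic.
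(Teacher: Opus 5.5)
Your proposal is correct and follows the paper's own proof. For part (1) you pull the comeager set \(A\) back along the continuous open maps \(\phi_i\) of Lemma~\ref{lem:cont_open}, intersect countably many comeager subsets of \(G\) to get a rotation carrying \(\Gamma\) into \(A\), and apply Theorem~\ref{thm:general_formQ}; you also prove that preimages of nowhere dense sets under such maps are nowhere dense, which the paper only asserts. For part (2) you pass from the countable uniform-cap property to the uniform-cap property by superset-closure with the identity motion, where the paper instead appeals to transitivity, but it is the same reduction.
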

	
	\begin{proof}
		Let \(S=S_R(p)\) be an admissible sphere and let \(A\subset S\) be any comeager subset. Let \(\Gamma= \{q_i\}_{i\in I}\) be a countable subset of \(S\) 
		that has property \(\cPx\). For \(i\in I\) let 
		\(\phi_i \colon G \to S\), \( \phi_i({a}) = a\cdot q_i\),  as in Lemma~\ref{lem:cont_open}. Since \(\phi_i\) is continuous, open, and surjective, and \(A\) is comeager in \(S\), it follows that \(\phi_i^{-1}(A)\) is comeager in \(G\), for all \(i \in I\). As \(I\) is countable, the intersection of these sets is comeager, hence, there exists
		\[
		a \in \bigcap_{i \in I} \phi_{i}^{-1}(A) \subset G,
		\]
		and then \({a}\cdot \Gamma \subset A\). Since \(\cPx\) is a rigid geometric property, \(A\) has property \(\cPx\). By the first part of Theorem~\ref{thm:general_formQ}, \(\f\) is somewhere locally monochromatic.
		
		The hypothesis of the second statement implies the condition in the first one, hence \(\f\) is somewhere locally monochromatic. Since the group of rigid motions of a sphere acts transitively and \(\cPx\) is a rigid geometric property, those hypotheses also imply that the condition of the second part of Theorem~\ref{thm:general_formQ} are satisfied. Therefore, \(\f\) is monochromatic.
	\end{proof}
	
	\begin{corollary}\label{cor:open_subsets}
		Let \(\f\) be a coloring that satisfies \(\cQ_{\cPx,\r^\epsilon}^*\) for a  rigid geometric property \(\cPx\). If every open set of every sphere has a countable subset with property \(\cPx\), then \(\f\) is monochromatic.
	\end{corollary}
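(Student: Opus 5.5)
The plan is to reduce Corollary~\ref{cor:open_subsets} to part (2) of Theorem~\ref{thm:rigid_Q}, which only requires showing that \(\cPx\) is a countable {\ucp}. Since \(\cPx\) is rigid and \(\f\) satisfies \(\cQ_{\cPx,\r^\epsilon}^*\) by assumption, once that is established the theorem gives that \(\f\) is monochromatic.

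To verify the countable {\ucp}, we fix any \(\delta\in(0,1)\), say \(\delta = 1/2\). Let \(S_r(p)\) be an admissible sphere and \(x\in S_r(p)\). The open cap
\[
U = B_{\delta r\sqrt{2}}(x)\cap S_r(p)
\]
is an open subset of the sphere \(S_r(p)\) in the induced topology, and it is nonempty because it contains \(x\). By the hypothesis of the corollary, every open set of every sphere has a countable subset with property \(\cPx\). Hence \(U\) contains a countable subset with property \(\cPx\), and this holds uniformly for every admissible sphere and every point on it. This is exactly the definition of a countable {\ucp} with this choice of \(\delta\).

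As a consistency check, part (1) of Theorem~\ref{thm:rigid_Q} needs every admissible sphere to contain a countable subset with property \(\cPx\). Taking the whole sphere as the open set gives this directly. In any case, the proof of Theorem~\ref{thm:rigid_Q}(2) already derives (1) from the countable uniform-cap hypothesis.

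There is no real obstacle here. The only point needing care is the interpretation of ``every open set'': it must mean every nonempty open subset in the sphere's induced topology. Under that reading the caps are such sets, and \(\delta\) can be chosen freely because the hypothesis applies to open sets of any size.
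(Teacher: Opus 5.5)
Your proposal is correct and is the intended argument; the paper states this corollary without proof, as an immediate consequence of Theorem~\ref{thm:rigid_Q}(2). Each cap \(B_{\delta r\sqrt{2}}(x)\cap S_r(p)\) is a nonempty open subset of the sphere, so the hypothesis gives the countable uniform-cap property for any \(\delta\in(0,1)\). You are also right that ``every open set'' has to mean every nonempty one.
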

	
	\medskip
	
	\section{Simplex Shape Conditions}\label{sec:IT-ET-RT}
	
	In this section we study colorings of \(\R^n\) satisfying \(\cQ_{\cPx, \r^{\epsilon}}^*\), where \(\cPx\) is one of the  rigid geometric properties \(\cPx_{(IS^m)}\), \(\cPx_{(ES^m)}\), or \(\cPx_{(RS^m)}\), with \(2\leqslant m \leqslant n\).
	
	The main result of this section is the following.
	
	\begin{theorem}\label{thm:BPdisk}
		Let \(\f\) be a coloring or \(\R^n\) that satisfies a condition \(\cQ_{\cPx, \r^{\epsilon}}^*\),
		where \(\cPx\) is one of \(\cPx_{(IS^m)}\), \(\cPx_{(ES^m)}\), or \(\cPx_{(RS^m)}\), with \(2\leqslant m \leqslant n\).
		\begin{enumerate}
			\item If \(m<n\), or \(m=n\) and \(\cPx =  \cPx_{(IS^n)}\), or \(m=n\geqslant 3\) and \(\cPx = \cPx_{(RS^n)}\), then \(\f\) is monochromatic.
			\item If \(m=n\) and \(\cPx=\cPx_{(ES^n)}\), or \(m=n=2\) and \(\cPx=\cPx_{(RS^2)}\), then \(\f\) is somewhere monochromatic, but not necessarily monochromatic.
		\end{enumerate}
	\end{theorem}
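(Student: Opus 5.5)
The plan is to derive everything from Theorem~\ref{thm:rigid_Q} and Corollary~\ref{cor:open_subsets}. The point is that each of \(\cPx_{(IS^m)}\), \(\cPx_{(ES^m)}\), \(\cPx_{(RS^m)}\) is a rigid geometric property witnessed by a \emph{finite}, hence countable, set: the \(m+1\) vertices. So the only geometric input needed is where such vertex sets can be placed on a sphere \(S_r(p)\). Throughout, by scaling and a rigid motion I may reduce to a fixed shape of simplex inscribed in a fixed sphere.

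\textbf{Part (1), case \(m<n\).} I would verify the hypothesis of Corollary~\ref{cor:open_subsets}: every nonempty open subset \(W\) of every sphere \(S_r(p)\) contains the vertices of a regular, an isosceles, and a right \(m\)-simplex.
\begin{enumerate}
\item Pick \(x\in W\) and an \(m\)-dimensional affine flat \(F\) that passes near \(x\) and is almost tangent to \(S_r(p)\) there. Since \(m\leqslant n-1\), the intersection \(F\cap S_r(p)\) is an \((m-1)\)-sphere of arbitrarily small radius, lying inside \(W\).
\item Every \(m\)-simplex in \(F\) has a circumsphere in \(F\), which is an \((m-1)\)-sphere. After scaling, any regular, isosceles, or right \(m\)-simplex can therefore be inscribed in \(F\cap S_r(p)\).
\item Corollary~\ref{cor:open_subsets} then gives monochromaticity.
\end{enumerate}

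\textbf{Part (1), case \(m=n\).} Here the simplex must be inscribed in \(S_r(p)\) itself, so open sets no longer suffice. Instead I would show that \(\cPx\) is a countable \ucp\ and apply Theorem~\ref{thm:rigid_Q}(2). The key observation is the following.
\begin{itemize}
\item Suppose a fixed inscribed simplex \(\sigma\) does \emph{not} contain the circumcenter \(p\). Then, since the vertex set is finite, there is a unit vector \(u\) with \((v-p)\cdot u\geqslant t>0\) for all vertices \(v\).
\item Hence all vertices lie in an open cap around \(p+ru\) of angular radius strictly less than \(90^\circ\). That cap has radius \(\delta r\sqrt2\) for some \(\delta<1\).
\item Since \(\delta\) depends only on the shape of \(\sigma\), scaling and rigid motions make it uniform over all spheres and all cap centers \(x\).
\end{itemize}
It therefore suffices to exhibit one such simplex for each property.
\begin{itemize}
\item \emph{Right simplices, \(n\geqslant3\).} Take the right simplex with apex \(0\) and vertices \(l e_i\). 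Its circumcenter is \(\tfrac l2\sum e_i\). The barycentric coefficient of the apex is \(1-\tfrac n2<0\), so the circumcenter lies outside the simplex.
\item \emph{Isosceles simplices.} Take an apex \(x\in S_r(p)\) and \(n\) affinely independent points on the small \((n-2)\)-sphere of points of \(S_r(p)\) equidistant from \(x\). These lie in a hyperplane not through \(x\), so the simplex is nondegenerate, and it sits in an arbitrarily small cap, hence away from \(p\).
\end{itemize}
This uniform-cap verification is where I expect the care to lie, in particular checking that the separation argument really yields the cap radius \(\delta r\sqrt2\) used in the proof of Theorem~\ref{thm:general_formQ}(2).

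\textbf{Part (2).} Every sphere contains an inscribed regular \(n\)-simplex, and for \(n=2\) a right triangle. So Theorem~\ref{thm:rigid_Q}(1) yields that \(\f\) is somewhere locally monochromatic. For the non-monochromatic example I would color by half-spaces: \(\f=1\) on \(\{x_1\leqslant0\}\) and \(\f=2\) on \(\{x_1>0\}\). Both color classes have the Baire property, so \(\f\) is somewhere comeager.
\begin{itemize}
\item If a sphere \(S_r(p)\) contains a monochromatic regular \(n\)-simplex, its circumcenter \(p\) lies in the simplex.
\item If it contains a monochromatic right triangle, \(p\) is the midpoint of the hypotenuse, which also lies in the triangle.
\item Each half-space is convex, so in either case \(p\) lies in the same color class as the vertices.
\end{itemize}
Hence \(\cQ\) holds for every set of radii while \(\f\) is not constant. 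This is exactly the obstruction described in the introduction: the circumcenter lies in the convex hull of the vertices.
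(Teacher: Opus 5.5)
Your proposal is correct. It agrees with the paper's proof in the case \(m<n\) (an \((m-1)\)-sphere of small radius inside the given open set, then Corollary~\ref{cor:open_subsets}) and in the somewhere-monochromatic half of part~(2). The counterexample rests on the same mechanism: convex color classes that contain the circumcenter. Your two-color half-space coloring is a two-color version of the paper's strip coloring \(\lfloor x_n\rfloor\), which the paper also adapts to any finite or countable number of colors.

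The genuine difference is in the case \(m=n\).

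\emph{Isosceles simplices.} The paper applies Corollary~\ref{cor:open_subsets} directly, with apex \(p\in U\) and base on \(S_{r/2}(p)\cap S\). Your own construction already places the simplex in an arbitrarily small cap, so it gives that hypothesis immediately; the detour through the {\ucp} is not needed there.

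\emph{Right simplices.} The paper computes explicitly. It centers the cap at the apex, so the other vertices are at distance \(a<\delta r\sqrt2\), with \(r=a\sqrt n/2\). This holds exactly when \(\delta>\sqrt{2/n}\), which is compatible with \(\delta<1\) precisely when \(n\geqslant 3\). You instead prove a general separation lemma: if some inscribed simplex with a motion- and scale-invariant property misses its circumcenter, the property is a countable {\ucp}. You then detect this for right simplices through the apex's barycentric coefficient \(1-\tfrac n2\).

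Both routes give the same threshold. The paper's is shorter and yields an explicit \(\delta\). Yours is more conceptual. Combined with the convex-coloring counterexample, it turns the paper's informal remark about the circumcenter lying in the convex hull into a precise dichotomy for classes of full-dimensional simplices closed under rigid motions and scaling:
\begin{itemize}
\item if some member omits its circumcenter, every coloring satisfying \(\cQ_{\cPx,\r^\epsilon}^*\) is monochromatic;
\item if every member contains it, a non-monochromatic Baire counterexample exists.
\end{itemize}
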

	
	\begin{proof}
		Every sphere in \(\R^m\) contains the vertices of isosceles, equilateral, and right \(m-\)simplices. Since every sphere in \(\R^n\) contains a subset isometric with a sphere in \(\R^m\), it follows that every sphere in \(\R^n\) contains the vertices of isosceles, equilateral, and right \(m-\)simplices. By the first part of Theorem~\ref{thm:rigid_Q}, the coloring \(\f\) is somewhere monochromatic.
		
		Suppose \(m<n\). Let \(S\) be a sphere in \(\R^n\), let \(U \subset S\) be a nonempty open subset of \(S\), and let \(p \in U\). Then there exists an open ball  \(B_{r}(p)\) in \(\R^n\) such that \(p \in B_r(p) \cap S \subset U\). The intersection \(S_{r/2}(p)\cap S\) is isometric with  a sphere in \(\R^{n-1}\) and, since \(m \leqslant n-1\), it contains a subset isometric with a sphere \(S'\) in \(\R^m\). The sphere \(S'\) contains the vertices of isosceles, equilateral, and right \(m-\)simplices, and therefore, \(U\) contains the vertices of isosceles, equilateral, and right \(m-\)simplices. By Corollary~\ref{cor:open_subsets}, the coloring \(\f\) is monochromatic.
		
		Suppose that \(m=n\) and \(\cPx =  \cPx_{(IS^n)}\). Let \(U\), \(p\), and \(r\) be as above, and let \(\sigma'\) be any \((n-1)-\)simplex with vertex set \(V \subset S_{r/2}(p)\cap S\). Then the \(n-\)simplex with vertex set \(V \cup \{p\}\) is an isosceles \(n-\)simplex with vertices in \(U\). By Corollary~\ref{cor:open_subsets}, the coloring \(\f\) is monochromatic.
		
		Suppose that \(m=n \geqslant 3\) and \(\cPx = \cPx_{(RS^n)}\). Let \(p_0, p_1, \ldots, p_n\) be the vertices of a right simplex \(\sigma\) with apex \(p_0\) and such that \(d(p_0,p_i) = a> 0\) for all \(i=1, \ldots, n\). The circumradius of \(\sigma\) is \(r=a\sqrt{n}/2\). Therefore, if 
		\[1> \delta > \sqrt{\frac{2}{n}},\]
		then every open cap of radius \(\delta r \sqrt{2}\) of every sphere of radius \(r\) contains a finite set with property \(\cPx_{(RS^n)}\). By the second part of Theorem~\ref{thm:rigid_Q}, the coloring \(\f\) is monochromatic.
		
		Finally, we construct a (counter)example to show that if \(m=n\) and \(\cPx=\cPx_{(ES^n)}\), or if \(m=n=2\) and \(\cPx=\cPx_{(RS^2)}\), then \(\f\) does not have to be monochromatic. Define a coloring \(\f \colon \R^n \to \Z\) by horizontal strips:
		\[
		\f(x_1, \ldots, x_n)=\lfloor x_n\rfloor\in\mathbb Z.
		\]
		The color class of \(c\in\mathbb Z\) is
		\[
		\f^{-1}(c)=\R^{n-1}\times[c,c+1).
		\]
		Each \(\f^{-1}(c)\) is Borel, hence \(\f\) is a Baire coloring. Moreover, each color class is convex.
		
		For equilateral simplices, and for full-dimensional right simplices in \(\R^2\) (right-angle triangles), the circumcenter lies in the convex hull of
		the vertices. Since each color class is convex, if the vertices of any equilateral
		or right simplex lie in the same color class, then so does the circumcenter.
		Thus \(\f\) is a Baire coloring that satisfies \(\cQ_{(ES^{n}), \r^{\epsilon}}^*\) and, for \(n=2\), \(\cQ_{(RS^{2}), \r^{\epsilon}}^*\). However, \(\f\) is not monochromatic.
		
		The construction above can easily be modified to the case of finitely many
		colors. For \(|\cC| \geqslant 2\), finite, merge all color classes with \(c<0\) into one class and all color
		classes with \(c\geqslant  {|\cC|}-2\) into another, leaving the remaining classes unchanged.
		This yields a Baire coloring with \(|\cC|\) colors that satisfies both
		\(\cQ_{(ES^{n}), \r^{\epsilon}}^*\) and \(\cQ_{(RS^{2}), \r^{\epsilon}}^*\), but is not monochromatic.
	\end{proof}
	
	\medskip

	\section{Edge-Length Conditions}\label{sec:edge_length}
	
	In this section we study conditions based on simplices with restricted edge lengths. 
	
	Let \(\S \subset(0,\infty)\) be a nonempty set and \(1 \leqslant k \leqslant N = \binom{n+1}{2}\). 	A somewhere comeager coloring 	\(\f\colon \R^n \to \cC\) satisfies condition \(\cQ_{k \S N, \r^{\epsilon}}^*\) if for every color \(c\in \cC\) and every admissible sphere \(S_r(p)\), if \(\f^{-1}(c) \cap S_r(p)\) contains the vertices of an \(n-\)simplex \(\sigma\) with at least \(k\) edge-lengths in \(\S\),  then \(f(p) = c\).
	
	The following result characterizes the sets \(\S\) for which every coloring satisfying the condition  \(\cQ_{(k\S N), \r^{\epsilon}}^*\) is monochromatic.
	
	\begin{theorem}\label{thm:infL}
		Let \(1\leqslant k < N = \binom{n+1}{2}\). The following are equivalent:
		\begin{itemize}
			\item[(i)] Every coloring \(\f \colon \R^n \to \cC\) satisfying \(\cQ_{(k\S N), \r^{\epsilon}}^*\) is
			monochromatic.
			\item[(ii)] \(\inf(\S)=0\).
		\end{itemize}
	\end{theorem}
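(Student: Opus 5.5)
We prove the two implications separately. The direction (ii)\(\Rightarrow\)(i) will go through the second part of Theorem~\ref{thm:rigid_Q}, or more conveniently Corollary~\ref{cor:open_subsets}. The direction (i)\(\Rightarrow\)(ii) will go by constructing a non-monochromatic somewhere comeager coloring whenever \(\inf(\S)=\lambda>0\).

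\emph{(ii)\(\Rightarrow\)(i).} The property \(\cPx_{k\S N}\) is a rigid geometric property, because edge lengths are invariant under \(\SE(n)\). By Corollary~\ref{cor:open_subsets}, it suffices to show that every nonempty open subset \(U\) of every sphere \(S=S_r(p)\) contains the vertices of a nondegenerate \(n\)-simplex with at least \(k\) edge lengths in \(\S\). Since \(k\leqslant N-1\), it is enough to produce \(N-1\) such edges.

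Fix \(x\in U\) and \(\rho>0\) with \(B_\rho(x)\cap S\subset U\). Choose \(\ell\in\S\) with \(\ell<\rho/2\), which is possible because \(\inf\S=0\) and \(\S\subset(0,\infty)\). We then want \(n+1\) points on \(S\cap B_\rho(x)\) such that all pairwise distances but one equal \(\ell\), and such that the points are affinely independent.

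\begin{enumerate}
\item Start from a regular \((n-1)\)-simplex of edge \(\ell\) lying on the small \((n-2)\)-sphere \(S\cap S_{s}(x)\) for a suitable \(s\). This sphere is a round \((n-2)\)-sphere of radius about \(s\), so it contains regular \((n-1)\)-simplices of any edge up to roughly \(s\sqrt{2(n)/(n-1)}\).
\item Use \(x\) together with these vertices, adjusting \(s\) so that the distance from \(x\) to each vertex is \(\ell\). Concretely, take \(S\cap S_\ell(x)\), which is an \((n-2)\)-sphere of radius slightly less than \(\ell\) (close to \(\ell\) when \(\ell\ll r\)). Inscribe in it an \((n-1)\)-simplex whose edges all equal \(\ell\) except possibly one. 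In dimension \(n-2\geqslant 0\) one can place \(n\) points with all but one mutual distances prescribed, by moving one vertex along a continuum.
\end{enumerate}

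This simplex sits on the circumsphere \(S\), and \(x\) is off the hyperplane of the other \(n\) vertices, so the resulting \(n\)-simplex is nondegenerate. It has all edges from the apex \(x\) equal to \(\ell\), and at least \(N-1\geqslant k\) edges in \(\S\). The main obstacle is exactly this elementary geometric construction: verifying that the \(n\) points on the \((n-2)\)-sphere of radius \(\approx\ell\) can be chosen with \(\binom{n}{2}-1\) distances equal to \(\ell\) and are affinely independent.

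A simpler fallback, if the construction fails, is to use an isosceles configuration. We would then need \(k\leqslant\) (number of edges achievable), which uses \(k<N\) essentially. This reflects why \(k=N\) fails: a regular simplex of edge \(\ell\) does not fit on an \((n-2)\)-sphere of radius less than \(\ell\).

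\emph{(i)\(\Rightarrow\)(ii).} Suppose \(\inf\S=\lambda>0\). We use a stripe coloring \(\f(x)=\lfloor 2x_n/\lambda\rfloor\) (merged to finitely many colors as in Theorem~\ref{thm:BPdisk} if \(\cC\) is finite), with slab width \(\lambda/2\). Every color class is Borel, and each class is comeager in its own open slab, so \(\f\) is somewhere comeager. Any monochromatic set lies in a slab of width \(\lambda/2\).

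It then remains to choose the parameters so that no monochromatic \(n\)-simplex has an edge of length at least \(\lambda\). A width-\(\lambda/2\) slab still contains long edges, since it is unbounded in the other directions, so the slab coloring alone does not do this. We therefore use a cube grid coloring instead: color by \(\lfloor 2x/\lambda\rfloor\in\Z^n\) (countable), with cubes of diameter \(\sqrt{n}\lambda/2\). We shrink the side to \(\lambda/(2\sqrt n)\) so that the diameter is below \(\lambda\). A monochromatic simplex then has all edges shorter than \(\lambda\), hence no edge in \(\S\). The condition \(\cQ^*\) is satisfied vacuously, yet \(\f\) is not monochromatic.
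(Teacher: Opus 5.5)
\textbf{Direction (i)$\Rightarrow$(ii).} After you abandon the slab coloring (rightly, since a slab contains arbitrarily long monochromatic segments), you arrive at the paper's argument: half-open cubes of diameter below $\inf(\S)$, colored injectively by $\Z^n$, so the condition holds vacuously.

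\textbf{Direction (ii)$\Rightarrow$(i).} Reducing to Corollary~\ref{cor:open_subsets} with $N-1$ edges in $\S$ is also what the paper does. The gap is the step you flag yourself. You never actually produce, inside an arbitrary open subset of an arbitrary sphere, an $n$-simplex with $N-1$ edge lengths in $\S$.
\begin{itemize}
\item Your step (1) is a dead end. A regular $(n-1)$-simplex spanning an $(n-2)$-sphere of radius $s$ has its edge fixed at $s\sqrt{2n/(n-1)}$, not ``any edge up to'' that value. With $s=\ell$ this edge is not $\ell$.
\item The sentence ``one can place $n$ points with all but one mutual distances prescribed, by moving one vertex along a continuum'' is not an argument.
\item The isosceles fallback only gives the $n$ edges at the apex. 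That covers $k\leqslant n$, but not $n<k\leqslant N-1$, a range that is nonempty once $n\geqslant 3$.
\end{itemize}
As written, (ii)$\Rightarrow$(i) is unproved for large $k$.

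\textbf{The missing construction.} Build the vertices by iterated sphere intersection, as the paper does.
\begin{itemize}
\item Put $p_0=x$ and $S_0=S$.
\item Given $p_i$ on the $(n-1-i)$-sphere $S_i$, set $S_{i+1}=S_i\cap S_{r_i}(p_i)$, with $r_i\in\S$ smaller than the diameter of $S_i$. Such $r_i$ exist because $\inf(\S)=0$. Then pick $p_{i+1}\in S_{i+1}$.
\item At the circle stage the intersection is two points $p_{n-1},p_n$.
\end{itemize}
Every later vertex lies on $S_{r_i}(p_i)$, so every edge except $p_{n-1}p_n$ has length in $\S$. Affine independence is automatic, because $p_i$ is not in the affine hull of $S_{i+1}$: a point of that hull equidistant from all of $S_{i+1}$ would be its center, which lies strictly inside the ball bounded by $S_i$.

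\textbf{Your single-$\ell$ version.} Your configuration is exactly this construction with all $r_i=\ell$, and it does work. However, you must then check that each intersection is nonempty. Let $t_i$ be the radius of $S_i$ and $r$ the radius of $S$. Then
\[
t_{i+1}^2=\ell^2\bigl(1-\ell^2/(4t_i^2)\bigr),
\]
so $x_i=t_i^2/\ell^2$ satisfies $x_{i+1}=1-1/(4x_i)$. This recursion stays above $1/2$ once $x_1>1/2$, that is, once $\ell<r\sqrt2$. Hence $\ell<2t_i$ at every stage. Choosing a fresh $r_i\in\S$ at each step, as the paper does, avoids this computation entirely.
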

	
	\begin{proof}
		We first prove \(ii) \rightarrow i)\), by showing that the rigid geometric property \(\cPx_{((N-1)\S N)}\) satisfies the hypothesis of Corollary~\ref{cor:open_subsets} - then \(\cPx_{(k\S N)}\) satisfies the same hypothesis for all \(1\leqslant k \leqslant N-1\).
		
		Let \(S= S_0\) be \emph{any} sphere in \(\R^n\) (here 0 is the index, not the radius) and let \(U = U_0 \subset S_0=S\) be an open subset. Let 
		\(p_0 \in U_0\) and \(t_0  \in (0,\infty) \) 
		such that \(B_{t_0}(p_0) \cap S_0 \subset U_0\). Since \(\inf(\S)=0\), there exists \(r_0 \in \S \cap (0,t_0)\). Define \(S_1  = S_{r_0}(p_0) \cap S_0 \subset U_0\). Then \(d(p_0,q) = r_0 \in \S\) for all \(q\in S_1\) and \(S_1\) is isometric to a sphere of radius \(t_1 > 0\) in \(\R^{n-1}\). Let \(p_1 \in S_1\). 
		
		We repeat the construction above for \((p_1, S_1, U_1 = S_1, t_1)\) instead of \((p_0, S_0, U_0, t_0)\), and then iterate.  We construct points \(p_0, p_1, \ldots, p_{n-2}\) on a decreasing chain of spheres \(S_0 \supset S_1 \supset \dotsb \supset S_{n-2}\), such that the dimensions of the spheres decrease by 1 at each step, and \(d(p_i, p_j) = r_i \in \S \) for all \(0 \leqslant i < j \leqslant n-2\). We can repeat the construction one more time, using circles \(S_{n-2}\) and \(S_{r_{n-2}}(p_{n-2})\), and let \(S_{n-1} = \{p_{n-1}, p_n\}\) be the resulting intersection. 
		
		Then the \(n-\)simplex with vertex set \(\Gamma = \{p_0, p_1, \ldots, p_{n-1},  p_n\}\subset U\) has the edge-lengths, with the possible exception of \(d(p_{n-1},p_n)\), in the set \(\S\) of admissible edge-lengths. Hence \(\Gamma \subset U\) has property \(\cPx_{((N-1)\S N)}\). By Corollary~\ref{cor:open_subsets}, for every \(1\leqslant k < N\), every coloring \(\f \colon \R^n \to \cC\) satisfying \(\cQ_{(k\S N), \r^{\epsilon}}^*\) is monochromatic.
		
		Next we prove, by \emph{reductio ad absurdum}, that  \(i) \rightarrow ii)\).  Assume that \(i)\) holds but \(\inf(\S)>0\).
		Choose \(\delta >0\) such that \(\delta \sqrt{n} <\inf(\S)\), and partition \(\R^n\) into
		half-open hypercubes
		\[
		Q_{(x_1, x_2, \ldots, x_n)}=\bigtimes_{j=1}^{n} [\delta x_j, \delta (x_j+1)), \qquad (x_1, \ldots, x_n)\in\Z^n.
		\]
		
		Fix a bijection \(\kappa\colon\Z^n\to\N\), and assign to  each point
		\(p\in Q_{(x_1, \ldots, x_n)}\) the color \(\f(p)=\kappa(x_1, \ldots, x_n)\).
		Clearly \(f\) is a somewhere comeager coloring. 
		
		Any segment joining points of the same color has length smaller than \(\delta \sqrt{n} < \inf(\S)\). Hence, if an \(n-\)simplex has an 
		edge of length in \(\S\), then it cannot have all the vertices of the same color. 
		Then, for all \(1\leqslant k <N\), the coloring \(\f\) satisfies \(\cQ_{(k\S N), \r^{\epsilon}}^*\), but is not monochromatic, contradicting \(i)\). Hence, \(\inf(\S)=0\).
	\end{proof}
	
	The case \(k=N = \binom{n+1}{2}\)  exhibits a qualitatively different behavior, as shown in the following example.
	
	\begin{example}\label{exmp:not_inf}
		Let \(\S \in (0,\infty)\) be a countable set with \(\inf(\S) = 0\) - for example, the set of values of a decreasing sequence converging to 0. The cardinality of the set 
		\[
		E_n = \{ (i,j) \mid 0 \leqslant i < j \leqslant n\}
		\]
		is \(|E_n| = N = \binom{n+1}{2}\) and the cardinality of the set of functions \(h \colon E_n \mapsto \S\) is \(\aleph_0^N= \aleph_0\). 
		We call such a function \(h\) \emph{feasible} 
		if there exists an \(n-\)simplex in \(\R^n\), with vertex set \(\{p_0, p_1, \ldots, p_n\}\), such that for all \((i,j) \in E_n\) we have \(d(p_i,p_j) = h(i,j)\). Not all functions \(h\) are feasible - for example, some violate the triangle inequality. For a feasible \(h\), there exists a unique - up to isometries - \(n-\)simplex \(\sigma\) 
		in \(\R^n\) with those prescribed edge lengths. Therefore, the set of circumradii of simplices realizing the feasible functions \(h\) is countable. 
		
		Let \(\r\) be the complement in \((0,\infty)\) of that countable set - then \(\r\) is comeager, hence, comeager near 0. No admissible sphere contains the vertices of an \(n-\)simplex with all edge-lengths in \(\S\). Therefore, every coloring \(\f \colon \R^n \to \cC\) that is somewhere comeager satisfies condition  \(\cQ_{(N\S N), \r^{\epsilon}}^*\), hence that condition does not force a coloring to be monochromatic, and not even somewhere locally monochromatic. \qedhere
	\end{example}
	
	Thus, for \((N\S N)\), a sufficient condition for monochromaticity must be stronger than \(\inf(\S)=0\). We give such a condition below.
	
	\begin{theorem}\label{thm:comeagerL}
		If the set \(\S\) of admissible edge-lengths is comeager near 0, then every coloring \(\f \colon \R^n \to \cC\) satisfying \(\cQ_{(N\S N), \r^{\epsilon}}^*\) is
		monochromatic.
	\end{theorem}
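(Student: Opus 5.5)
The plan is to apply Corollary~\ref{cor:open_subsets} to the rigid geometric property \(\cPx_{(N\S N)}\). It suffices to show that every nonempty open subset \(U\) of every sphere \(S\subset\R^n\) contains the vertex set of an \(n-\)simplex whose \(N\) edge lengths all lie in \(\S\). A finite vertex set is automatically countable, and \(\cPx_{(N\S N)}\) is rigid because edge lengths are preserved by rigid motions. The corollary then gives monochromaticity directly.

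The construction follows the iterative scheme in the proof of Theorem~\ref{thm:infL}. The difference is the last edge \(d(p_{n-1},p_n)\): there we could leave it arbitrary, but here we must also force it into \(\S\). Choose \(\epsilon'>0\) with \(\S\) comeager in \((0,\epsilon')\). Fix \(p_0\in U\) and \(t_0>0\) with \(B_{t_0}(p_0)\cap S\subset U\). We pick radii \(r_0,r_1,\dots\) one at a time, each from \(\S\cap(0,\min(t,\epsilon'))\), which is nonempty by comeagerness. At each stage we pass to \(S_{i+1}=S_{r_i}(p_i)\cap S_i\), a sphere of one lower dimension, and choose \(p_{i+1}\in S_{i+1}\). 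This produces \(p_0,\dots,p_{n-2}\) with \(d(p_i,p_j)=r_i\in\S\) for all \(i<j\). For small radii the configuration is nondegenerate, so the points are affinely independent. We end on a circle \(S_{n-2}\) of some radius \(\rho>0\), all of whose points lie at distance \(r_i\in\S\) from each \(p_i\), \(i\leqslant n-3\).

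The last step is the main obstacle. We must choose \(p_{n-1},p_n\) on the circle \(S_{n-2}\) so that three edge lengths land in \(\S\) simultaneously: \(d(p_{n-2},p_{n-1})\), \(d(p_{n-2},p_n)\), and \(d(p_{n-1},p_n)\). I would handle this with a Baire-category argument rather than exact choices. The point \(p_{n-2}\) lies on the circle \(S_{n-2}\) (it was chosen in \(S_{n-2}\)), so the chord lengths from \(p_{n-2}\) to points of the circle sweep out the interval \((0,2\rho)\), and they do so through a local homeomorphism away from \(p_{n-2}\) and its antipode.

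Parametrize the pair \((p_{n-1},p_n)\) by two angles \((\theta,\varphi)\) on the circle. The three chord lengths are real-analytic functions of \((\theta,\varphi)\), each with nonvanishing gradient on an open set of pairs. Preimages of comeager sets under continuous open maps are comeager, as used in Theorem~\ref{thm:rigid_Q}. So for each of the three edges, the set of small-angle pairs \((\theta,\varphi)\) realizing a length in \(\S\) is comeager in a suitable open box. The open box is chosen so that all three lengths lie in \((0,\epsilon')\) and the points are distinct and not antipodal to \(p_{n-2}\). Intersecting the three comeager sets gives a nonempty set, which yields the required \(p_{n-1},p_n\).

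Two routine checks remain. First, the resulting \(n+1\) points are affinely independent, since two distinct points on a circle together with the earlier \(p_i\) span \(\R^n\) by the nested-sphere construction. Second, every chosen point lies in \(U\), since all distances are less than \(t_0\) and everything lies on \(S\). If the openness of the maps at the last step proved delicate, I would instead first fix \(p_{n-1}\) with \(d(p_{n-2},p_{n-1})\in\S\), using only that \(\S\) is nonempty near \(0\). Then the two constraints on \(p_n\) are \(d(p_{n-2},p_n)\in\S\) and \(d(p_{n-1},p_n)\in\S\), which are two comeager conditions on a single angle; their intersection is comeager in an interval and hence nonempty.
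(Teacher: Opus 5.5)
Your proof is correct, but it takes a different route from the paper's.

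\textbf{The paper's route.} The paper does not iterate nested spheres. It uses a one-parameter symmetric family: the apex \(p_0\in U\) together with a regular \((n-1)\)-simplex inscribed in the \((n-2)\)-sphere \(S\cap S_t(p_0)\). Such a simplex has only two distinct edge lengths, \(t\) and \(h(t)\), where \(h\) is an increasing homeomorphism \((0,r_0)\to(0,h(r_0))\) for small \(r_0\). Both \(\mathcal{L}\) and \(h(\mathcal{L}\cap(0,r_0))\) are comeager in \((0,h(r_0))\), so some \(r\in\mathcal{L}\) has \(h(r)\in\mathcal{L}\). A single Baire intersection on an interval then finishes the proof.

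\textbf{Your route.} You reuse the construction of Theorem~\ref{thm:infL}. All edges outside the final triangle \(p_{n-2},p_{n-1},p_n\) on the circle \(S_{n-2}\) are placed in \(\mathcal{L}\) using only that \(\mathcal{L}\) is dense near \(0\). You then spend the comeagerness hypothesis only on the three chords of that triangle, via preimages of comeager sets under open chord-length maps. Your fallback, which fixes \(p_{n-1}\) first and imposes two comeager conditions on a single angle, is the cleaner version of this step.

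\textbf{Comparison.} The paper's argument is shorter, because symmetry collapses all \(N\) constraints to two. Yours requires more bookkeeping:
\begin{enumerate}
\item affine independence through the nested affine spans;
\item keeping the parameter box off the diagonal and away from antipodal positions;
\item the case \(n=2\), where the iteration is empty and \(S_{n-2}=S\);
\item for \(n\geqslant 3\), checking that every \(p_j\) with \(j\geqslant 1\) lies at distance \(r_0<t_0\) from \(p_0\), which is what puts the vertices in \(U\).
\end{enumerate}
In exchange, your argument makes explicit that only the last triangle needs more than \(\inf(\mathcal{L})=0\).
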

	
	\begin{proof}
		Let \(\delta > 0\) such that \(\S\) is comeager in \((0,\delta)\).
		
		Let \(S\) be a sphere of radius \(R>0\) in \(\R^n\) and \(U \subset S\) an open subset of \(S\). Let \(p_0 \in U\) and \(t_0 > 0 \) such that \(p_0 \in S\cap B_{t_0}(p_0) \subset 
		U\). 
		
		Let \(h \colon (0,t_0) \to (0,\infty)\) be the function defined as follows.
		For \(t\in (0,t_0)\), the intersection \(S\cap S_{t}(p_0)\) is isometric with a sphere \(S' \subset \R^{n-1}\). Let \(h(t)\) be the length of the edges of a regular \((n-1)-\)simplex inscribed in \(S'\).  For small enough \(t_0 > 0\), the continuous function \( h \colon (0,t_0) \to (0,\infty)\) is increasing. Let \(r_0 \in (0,t_0)\) such that \(\max(r_0, h(r_0)) < \delta\) and \(h \colon (0,r_0) \to (0,h(r_0))\) is a homeomorphism. Since both \(\S\) and \(h(\S \cap (0,r_0))\) are comeager in \((0,h(r_0))\), it follows that their intersection is nonempty. Let \(r \in \S \cap (0, r_0)\) such that \(h(r) \in \S\).
		
		Let \(\{p_1, \ldots, p_n\}\) be the set of vertices of a regular \((n-1)-\)simplex inscribed in the sphere \(S\cap S_r(p_0) \subset U\). 
		Then the  \(n-\)simplex with set of vertices \(\{p_0, p_1, \ldots, p_{n-1},  p_n\}\subset U\) has the edge-lengths \(r, h(r) \in \S\). By Corollary~\ref{cor:open_subsets}, every coloring \(\f \colon \R^n \to \cC\) satisfying \(\cQ_{(N\S N), \r^{\epsilon}}^*\) is monochromatic.
	\end{proof}
	
	\medskip

	\section{Volume Conditions}\label{sec:volumes}
	
	In this section we study conditions based on simplices with restricted admissible volumes. 
	
	Let \(\V \subset(0,\infty)\) be a nonempty set and let \(2 \leqslant m \leqslant n\).  A somewhere comeager coloring \(\f\colon \R^n \to \cC\) satisfies condition \(\cQ_{(\vol_m, \V), \r^{\epsilon}}^*\) if for every color \(c\in \cC\) and every admissible sphere \(S_r(p)\), if \(\f^{-1}(c) \cap S_r(p)\) contains the vertices of an \(m-\)simplex \(\sigma\) with \(\vol_m(\sigma) \in \V\), then \(f(p) = c\).
	
	The following result characterizes the sets \(\V\) for which every coloring satisfying the condition \(\cQ_{(\vol_m, \V), \r^{\epsilon}}^*\) is monochromatic.
	
	\begin{theorem}\label{thm:infL_vol}
		Let \(2\leqslant m \leqslant n\). The following are equivalent:
		\begin{itemize}
			\item[(i)] Every coloring \(\f \colon \R^n \to \cC\) satisfying \(\cQ_{(\vol_m, \V), \r^{\epsilon}}^*\) is
			monochromatic.
			\item[(ii)] \(\inf(\V)=0\).
		\end{itemize}
	\end{theorem}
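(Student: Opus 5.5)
We mirror the proof of Theorem~\ref{thm:infL}, with volume in place of edge length.

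For \((ii)\Rightarrow(i)\), we verify the hypothesis of Corollary~\ref{cor:open_subsets} for the rigid geometric property \(\cPx_{(\vol_m,\V)}\). The property is rigid because \(m\)-volume is invariant under rigid motions. Let \(S\) be any sphere in \(\R^n\) and \(U\subset S\) a nonempty open subset. Fix \(p_0\in U\) and \(t_0>0\) with \(B_{t_0}(p_0)\cap S\subset U\).

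First, \(U\) contains the vertices of \(m\)-simplices whose \(m\)-volumes cover an interval \((0,v_0)\) for some \(v_0>0\). To build them, choose affinely independent points \(q_0,\dots,q_m\in U\). This is possible because \(S\cap B_{t_0}(p_0)\) is an open piece of an \((n-1)\)-sphere, which contains \(m+1\leqslant n+1\) affinely independent points; for instance, take \(p_0\) together with \(m\) points of a small \((n-2)\)-sphere \(S\cap S_t(p_0)\) chosen in general position. Then move one vertex, say \(q_m\), continuously along a path in \(U\) toward \(q_{m-1}\), keeping the others fixed. The volume \(\vol_m\) of the simplex with vertices \(q_0,\dots,q_m\) is a continuous function of the vertices. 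It is positive at the start and tends to \(0\) as \(q_m\to q_{m-1}\), since the points become affinely dependent. By the intermediate value theorem it attains every value in \((0,v_0)\), where \(v_0=\vol_m\) of the initial simplex. Affine independence persists along the path whenever the volume is positive.

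Since \(\inf(\V)=0\), there is some \(v\in\V\cap(0,v_0)\). The simplex realizing volume \(v\) has its finite vertex set in \(U\), so \(U\) has a countable subset with property \(\cPx_{(\vol_m,\V)}\). Corollary~\ref{cor:open_subsets} then gives monochromaticity.

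For \((i)\Rightarrow(ii)\), argue by contradiction and assume \(\inf(\V)>0\). Reuse the cube coloring from the proof of Theorem~\ref{thm:infL}, with side \(\delta\) chosen so small that every \(m\)-simplex inside a cube of diameter \(\delta\sqrt n\) has volume less than \(\inf(\V)\). This works because any \(m\)-simplex with diameter at most \(D\) has \(m\)-volume at most \(D^m/m!\). The reason is that the volume equals \(\frac{1}{m!}\) times the product of successive heights, each at most an edge length, which is at most \(D\). So it suffices that \((\delta\sqrt n)^m/m!<\inf(\V)\).

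Then no monochromatic \(m\)-simplex has volume in \(\V\), so \(\cQ_{(\vol_m,\V),\r^\epsilon}^*\) holds vacuously. The coloring is somewhere comeager, since each color class is a cube and hence comeager in that cube's interior, and it is not monochromatic. This contradicts \((i)\).

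The main obstacle is the continuity/path step in the first direction: we need a path in \(U\) along which the simplex stays nondegenerate until the limit. I expect to handle this by moving \(q_m\) along a geodesic arc of the sphere toward \(q_{m-1}\). Any intermediate values where the volume vanishes are harmless, because the intermediate value theorem only needs some configuration achieving \(v\), and any configuration with positive volume is automatically a genuine simplex.
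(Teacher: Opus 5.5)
Your proposal is correct and follows essentially the same route as the paper. For \((ii)\Rightarrow(i)\), the paper also uses the intermediate value theorem to find a simplex in \(U\) with volume in \(\V\) and then applies Corollary~\ref{cor:open_subsets}; its one-parameter family is \(p_0\) together with vertices of a regular \((n-1)\)-simplex inscribed in \(S\cap S_r(p_0)\), shrinking as \(r\to 0\), rather than your path deformation. For \((i)\Rightarrow(ii)\), the paper reuses the same cube coloring but bounds the volume by a compactness constant \(c_m\delta^m\) instead of your explicit bound \(D^m/m!\).
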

	
	\begin{proof}
		We first prove \(ii) \rightarrow i)\), by showing that the rigid geometric property \(\cPx_{(\vol_m, \V)}\) satisfies the hypothesis of the second part of Theorem~\ref{thm:rigid_Q}.
		
		Let \(S\) be a sphere of radius \(R>0\) in \(\R^n\) and \(U \subset S\) an open subset of \(S\). Let \(p_0 \in U\) and \(r_0 > 0 \) such that \(p_0 \in S\cap B_{r_0}(p_0) \subset  U\).  For  \(r \in (0,r_0)\), let \(\{p_1, p_2, \ldots, p_n\}\) be the vertices of a regular \((n-1)-\)simplex inscribed in \(S\cap S_{r}(p_0)\). Let \(\Delta = \Delta(r)\) be the \(m-\)simplex with vertices \(\{p_0, p_1, \ldots, p_m\}\), and let \(h \colon (0, r_0) \to (0, \infty)\), defined by \(h(r) = \vol_m(\Delta(r))\). 
		Fix \(r_1 \in (0,r_0)\). Since \(\inf(\V)=0\), there exists \(v \in \V\) such that \(0 < v < h(r_1)\). Since \(h\) is continuous and \(\lim_{r\to 0} h(r) = 0\), there exists \(r\in (0,r_1)\) such that \(h(r) = v\). Then the \(m-\)simplex \(\Delta(r)\) has all vertices in \(U\) and has \(m-\)volume in \(\V\).
		
		By Corollary~\ref{cor:open_subsets}, every coloring satisfying \(\cQ_{(\vol_m, \V), \r^{\epsilon}}^*\) is monochromatic.
		
		Next we prove, by \emph{reductio ad absurdum}, that  \(i) \rightarrow ii)\).  Assume that \(i)\) holds but \(\inf(\V)>0\).
		
		We extend the \(m-\)volume function to convex hulls of \(m+1\) points in \(\R^n\) by setting the volume to be 0 if the points are not affinely independent. Let \(\square\) be the closed unit hypercube in \(\R^n\). Let
		\(h \colon \square^{m+1} \to [0,\infty]\) be defined by \(h(p_0, \ldots, p_m) = \vol_m(\Delta_{p_0, \ldots, p_m})\). As \(h\) is continuous and \(\square^{m+1}\) is compact, there exists \(c_m > 0\) such that \(h(p_0, \ldots, p_m) \leqslant c_m\) for all \((p_0, \ldots, p_m)~\in~\square^{m+1}\). Then, for every \(\delta > 0\),  closed hypercube \(\square_{\delta} \subset \R^n\) of edge-length \(\delta\), and points \(p_0, \ldots, p_m~\in~\square_{\delta}\), we have 
		\(\vol_m(p_0, \ldots, p_m) \leqslant c_m \delta^m\). 
		
		Fix \(\delta > 0\) such that \(c_m\delta^m < \inf{(\V)}\). The coloring constructed in the second half of the proof of Theorem~\ref{thm:infL} satisfies the condition  \(\cQ_{(\vol_m, \V), \r^{\epsilon}}^*\) but is not monochromatic, contradicting \(i)\). Hence \(\inf(\V)= 0\).
	\end{proof}

\end{document}